\documentclass[11pt]{amsart}

\usepackage[T1]{fontenc}
\usepackage[utf8]{inputenc}
\usepackage{lmodern}
\usepackage{amsmath,amssymb,amsthm,mathtools}
\usepackage{enumitem}
\usepackage[
 pdfauthor={},
 pdftitle={},
 pdfcreator={},
 colorlinks=true,
 linkcolor=blue,
 citecolor=blue,
 urlcolor=blue
]{hyperref}

\theoremstyle{plain}
\newtheorem{theorem}{Theorem}[section]
\newtheorem{proposition}[theorem]{Proposition}
\newtheorem{corollary}[theorem]{Corollary}
\newtheorem{lemma}[theorem]{Lemma}

\theoremstyle{definition}

\newtheorem{example}[theorem]{Example}

\theoremstyle{remark}
\newtheorem{remark}[theorem]{Remark}

\numberwithin{equation}{section}

\title[Generalized divisor topology of commutative rings]
{Generalized divisor topology of commutative rings}

\author[Koç]{Suat Koç$^{*}$}
\address{Department of Mathematics, Marmara University, İstanbul, Türkiye}
\email{suat.koc@marmara.edu.tr}
\thanks{$^{*}$Corresponding author.}
\author[Doğan]{İrem Doğan}
\address{Department of Mathematics, Marmara University, İstanbul, Türkiye}
\email{iremsarikoc@gmail.com}
\author[Erdemir]{Dilara Erdemir}
\address{Department of Mathematics, Yildiz Technical University, İstanbul, Türkiye}
\email{dilaraer@yildiz.edu.tr}
\author[Tekir]{Ünsal Tekir}
\address{Department of Mathematics, Marmara University, İstanbul, Türkiye}
\email{utekir@marmara.edu.tr}

\subjclass[2020]{Primary 13A15; Secondary 13A05, 54H10, 54H12}
\keywords{divisor topology, generalized divisor topology,
	radicals of principal ideals, Alexandrov space, Kolmogorov quotient,
	von Neumann regular ring, G-domain}

\begin{document}

\begin{abstract}
	Let $R$ be a commutative ring with nonzero identity and let $R^\#$
	denote the set of its nonzero nonunits. We extend the divisor topology
	$D(R)$, previously studied for integral domains, to arbitrary commutative
	rings and introduce the generalized divisor topology $GD(R)$ on
	$EC(R^\#)$. Its basic open sets are
	\[
	B_a=\{[b]\in EC(R^\#): b\mid a^n
	\text{ for some }n\geq 1\}.
	\]
	The relation
	\[
	[b]\in B_a
	\quad\Longleftrightarrow\quad
	\sqrt{aR}\subseteq\sqrt{bR}
	\]
	shows that $GD(R)$ records radical divisibility among principal ideals.
	We prove that $GD(R)$ is an Alexandrov space and identify its Kolmogorov
	quotient with the poset of radicals of nonzero proper principal ideals.
	This description yields characterizations of the $T_0$ and discrete
	properties and of the equality $GD(R)=D(R)$. We also determine the
	isolated points of $GD(R)$. Further, we characterize nestedness,
	compactness, the Lindel\"of property, and Noetherianity in terms of the
	order structure of radicals of principal ideals. In particular, for an
	integral domain $R$, $GD(R)$ is compact if and only if $R$ is a
	$G$-domain, while for a UFD the Lindel\"of and Noetherian properties are
	determined by the number of nonassociate prime elements. Finally, we study the interaction of $GD(R)$ with multiplication and
	describe the behavior of its Kolmogorov quotient under surjective
	homomorphisms with nil kernel.
\end{abstract}

\maketitle

\section{Introduction and basic reductions}

Divisibility provides a natural link between the multiplicative structure of a
ring and order-theoretic topology. In earlier work \cite{YigitKoc}, Yi\u{g}it and Ko\c{c}
introduced the divisor topology $D(R)$ for an integral domain $R$. If $R^\#$
denotes the set of nonzero nonunits of $R$ and elements are identified when
they generate the same principal ideal, then the basic open set determined by
$a\in R^\#$ is
\[
U_a=\{[b]\in EC(R^\#): b\mid a\}.
\]
The resulting topology reflects several familiar properties of domains. It is
an Alexandrov $T_0$-space, its isolated points correspond to irreducible
elements, and nestedness characterizes valuation domains. Further results concern countability, Noetherianity, and the infinitude
of prime elements in a UFD; the divisor-topological construction was
later extended to modules over domains in
\cite{divisortopologyofmodules}.

The domain hypothesis plays a substantial role in this picture. Once zero
divisors are allowed, ordinary divisibility still defines a topology, but it
does not record some of the ideal-theoretic information carried by powers and
radicals. The purpose of the present paper is therefore twofold. First, we
consider the divisor topology $D(R)$ for an arbitrary commutative ring.
Second, we introduce a radical version of this topology that is better adapted
to the presence of zero divisors and nilpotent elements.

Let $R$ be a commutative ring with nonzero identity and let
\[
R^\#=R\setminus\bigl(U(R)\cup\{0\}\bigr),
\]
where $U(R)$ denotes the group of units of $R$. For $a,b\in R^\#$, write
$a\sim b$ if $aR=bR$, and let $EC(R^\#)$ denote the corresponding set of
equivalence classes. For $a\in R^\#$, put
\[
B_a=\{[b]\in EC(R^\#):b\mid a^n
\text{ for some }n\geq1\}.
\]
The family $\{B_a:a\in R^\#\}$ forms a basis for a topology on
$EC(R^\#)$, which we call the \emph{generalized divisor topology} and
denote by $GD(R)$.

The passage from $D(R)$ to $GD(R)$ has a direct algebraic interpretation.
For $a,b\in R^\#$,
\[
[b]\in B_a
\quad\Longleftrightarrow\quad
b\mid a^n\text{ for some }n\geq1
\quad\Longleftrightarrow\quad
\sqrt{aR}\subseteq\sqrt{bR}.
\]
Thus $D(R)$ records ordinary divisibility among principal ideals, whereas
$GD(R)$ records divisibility after passing to their radicals. For a
nonnilpotent element $a$,
\[
B_a=\bigcup_{n\geq1}U_{a^n},
\]
while every nonzero nilpotent element satisfies
\[
B_a=EC(R^\#).
\]
These two observations account for much of the difference between the divisor
and generalized divisor topologies.

The radical order appearing here is related to structures already considered in
commutative algebra. In particular, radicals of principal ideals have
been studied from an order-theoretic viewpoint; see, for example,
\cite{Spirito}. Spaces of radical ideals endowed with the hull-kernel
topology have also been investigated in
\cite{FinocchiaroFontanaSpirito}. Our construction is different from
the latter: rather than placing a topology directly on a space of
radical ideals, the generalized divisor topology is defined on
$EC(R^\#)$ through divisibility by powers.

The radical description above is the main reduction used throughout the paper.
We show that $GD(R)$ is an Alexandrov space and that two points are
topologically indistinguishable precisely when the corresponding principal
ideals have the same radical. Consequently, the Kolmogorov quotient of
$GD(R)$ is naturally identified with the poset of radicals of principal
ideals arising from elements of $R^\#$, endowed with its upper Alexandrov
topology. This identification allows topological properties of $GD(R)$ to be
translated into order-theoretic properties of these radicals.

The first main results concern the information lost in passing from ordinary
divisibility to radical divisibility. We characterize the rings for which $GD(R)=D(R)$ and those for which
$GD(R)$ is $T_0$ or discrete
(Theorems~\ref{thm:T0-classification} and~\ref{thm:discrete}). We also determine the isolated points of $GD(R)$
(Proposition~\ref{prop:isolated}) and study the finite spaces arising
from $\mathbb Z_n$. A second group of results concerns global properties. We show that $GD(R)$ is nested if and only if the prime ideals of $R$
are linearly ordered, equivalently if the Zariski topology on
$\operatorname{Spec}(R)$ is nested
(Theorem~\ref{thm:nested}). Compactness and the Lindel\"of property are described in terms of
finite and countable coinitial families of radicals of principal
ideals, respectively, and Noetherianity is characterized by a
well-quasi-order condition. In particular, for an integral domain $R$, the space $GD(R)$ is
compact if and only if $R$ is a $G$-domain
(Theorem~\ref{thm:compact-G-domain}). For a UFD, the Lindel\"of and Noetherian properties are
determined by the number of nonassociate prime elements. Finally, we study the multiplicative structure of $GD(R)$ and the
behavior of its Kolmogorov quotient under surjective homomorphisms
with nil kernel.

Section~2 studies separation properties and the equality $GD(R)=D(R)$.
Section~3 treats isolated points and arithmetic examples. Section~4 is devoted
to nestedness, compactness, the Lindel\"of property, and Noetherianity.
Section~5 considers multiplicative and quotient constructions.

\subsection{Divisor and radical divisibility}

For completeness, we first record the elementary properties that will be used
throughout the paper. The definition of $B_a$ is independent of the choice of
representatives, since equality of principal ideals is preserved under taking
positive powers.

\begin{lemma}\label{lem:radical-divisibility}
	Let $R$ be a ring and let $a,b\in R^\#$. Then
	\[
	[b]\in B_a
	\quad\Longleftrightarrow\quad
	\sqrt{aR}\subseteq\sqrt{bR}.
	\]
	Equivalently,
	\[
	B_a\subseteq B_b
	\quad\Longleftrightarrow\quad
	\sqrt{bR}\subseteq\sqrt{aR}.
	\]
\end{lemma}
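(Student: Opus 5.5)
The plan is to prove the first equivalence directly from the definition of the radical, and then derive the second one from it using the fact that $[a]\in B_a$.

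\emph{First equivalence.} By definition, $[b]\in B_a$ means $b\mid a^n$ for some $n\geq 1$, that is, $a^n\in bR$, which says exactly that $a\in\sqrt{bR}$. Since $\sqrt{bR}$ is an ideal and radical ideals are closed under taking radicals, $a\in\sqrt{bR}$ gives $aR\subseteq\sqrt{bR}$ and hence $\sqrt{aR}\subseteq\sqrt{\sqrt{bR}}=\sqrt{bR}$. Conversely, if $\sqrt{aR}\subseteq\sqrt{bR}$, then $a\in aR\subseteq\sqrt{aR}\subseteq\sqrt{bR}$, so $a^n\in bR$ for some $n\geq1$, which is $[b]\in B_a$. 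Well-definedness on classes is already noted before the lemma, and the radical condition visibly depends only on the ideals $aR$ and $bR$.

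\emph{Second equivalence.} Suppose $B_a\subseteq B_b$. Since $a\mid a^1$, we have $[a]\in B_a$, hence $[a]\in B_b$. The first part then gives $\sqrt{bR}\subseteq\sqrt{aR}$. Conversely, suppose $\sqrt{bR}\subseteq\sqrt{aR}$ and let $[c]\in B_a$. The first part gives $\sqrt{aR}\subseteq\sqrt{cR}$, so by transitivity $\sqrt{bR}\subseteq\sqrt{cR}$, and the first part again yields $[c]\in B_b$.

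There is no real obstacle here. The only point needing care is the step $a\in\sqrt{bR}\Rightarrow\sqrt{aR}\subseteq\sqrt{bR}$, which uses idempotence of the radical operation and the fact that $\sqrt{bR}$ is an ideal. Nilpotent elements need no separate treatment: they lie in every radical, and the argument above covers them uniformly.
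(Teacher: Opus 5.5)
Your proof is correct and follows the paper's argument essentially verbatim: the chain $b\mid a^n \Leftrightarrow a^n\in bR \Leftrightarrow a\in\sqrt{bR} \Leftrightarrow \sqrt{aR}\subseteq\sqrt{bR}$ gives the first equivalence, and $[a]\in B_a$ together with transitivity of inclusion gives the second. The only addition is that you spell out the idempotence of the radical in the step $a\in\sqrt{bR}\Rightarrow\sqrt{aR}\subseteq\sqrt{bR}$, which the paper leaves implicit.
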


\begin{proof}
	We have
	\[
	[b]\in B_a
	\Longleftrightarrow
	b\mid a^n \text{ for some }n\geq1
	\Longleftrightarrow
	a^n\in bR \text{ for some }n\geq1.
	\]
	The last condition is equivalent to $a\in\sqrt{bR}$, and hence to
	$\sqrt{aR}\subseteq\sqrt{bR}$.
	
	For the second assertion, if $B_a\subseteq B_b$, then
	$[a]\in B_a\subseteq B_b$, so by the first assertion,
	\[
	\sqrt{bR}\subseteq\sqrt{aR}.
	\]
	Conversely, suppose that
	$\sqrt{bR}\subseteq\sqrt{aR}$ and let $[x]\in B_a$. Then
	\[
	\sqrt{bR}\subseteq\sqrt{aR}\subseteq\sqrt{xR},
	\]
	and hence $[x]\in B_b$. Therefore, $B_a\subseteq B_b$.
\end{proof}

\begin{proposition}\label{prop:basis}
	Let $R$ be a ring. Then the family
	\[
	\mathfrak B=\{B_a:a\in R^\#\}
	\]
	is a basis for a topology on $EC(R^\#)$. Moreover, $B_a$ is the smallest
	open neighbourhood of $[a]$ for every $a\in R^\#$. Consequently, $GD(R)$
	is an Alexandrov space.
\end{proposition}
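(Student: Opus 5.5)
The plan is to verify the two basis axioms directly, using Lemma~\ref{lem:radical-divisibility} throughout. Everything reduces to radical inclusions among principal ideals.

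\emph{Covering.} For every $a\in R^\#$ we have $a\mid a^1$, so $[a]\in B_a$. Hence $\bigcup_{a\in R^\#}B_a=EC(R^\#)$.

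\emph{Intersection.} Let $[x]\in B_a\cap B_b$. I claim that $[x]\in B_x\subseteq B_a\cap B_b$. We already know $[x]\in B_x$. By Lemma~\ref{lem:radical-divisibility}, $[x]\in B_a$ means $\sqrt{aR}\subseteq\sqrt{xR}$. The equivalent form of that lemma then gives $B_x\subseteq B_a$, and in the same way $B_x\subseteq B_b$. So every point of an intersection of two basic sets has a basic neighbourhood inside the intersection, and $\mathfrak B$ is a basis. Note that I do not need a product element such as $ab$, which could be zero in the presence of zero divisors. The step most likely to need care is exactly this one, avoiding $ab$, and the argument via $B_x$ sidesteps it.

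\emph{Smallest neighbourhood.} Let $U$ be open with $[a]\in U$. Since $\mathfrak B$ is a basis, there is $c\in R^\#$ with $[a]\in B_c\subseteq U$. Then $\sqrt{cR}\subseteq\sqrt{aR}$ by Lemma~\ref{lem:radical-divisibility}, so $B_a\subseteq B_c\subseteq U$. As $B_a$ is itself an open neighbourhood of $[a]$, it is the smallest one.

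\emph{Alexandrov property.} Let $\{U_i\}$ be open sets and $[a]\in\bigcap_i U_i$. Then $B_a\subseteq U_i$ for every $i$, so $B_a\subseteq\bigcap_i U_i$. Thus every point of an arbitrary intersection of open sets has an open neighbourhood inside it, so arbitrary intersections of open sets are open, and $GD(R)$ is Alexandrov.

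Well-definedness of $B_a$ on classes was already noted before the lemma, so no further obstacle remains.
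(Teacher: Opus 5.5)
Your proof is correct and follows the paper's argument essentially step for step. You obtain the covering from $[a]\in B_a$, the intersection axiom from $[x]\in B_x\subseteq B_a\cap B_b$ via Lemma~\ref{lem:radical-divisibility}, and minimality of $B_a$ by passing from a basic neighbourhood $B_c\ni[a]$ to $B_a\subseteq B_c$. The only difference is that you explicitly check that arbitrary intersections of open sets are open, which the paper leaves implicit in the standard equivalence between having smallest open neighbourhoods and being Alexandrov.
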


\begin{proof}
	Since $[a]\in B_a$ for every $a\in R^\#$,
	\[
	EC(R^\#)=\bigcup_{a\in R^\#}B_a.
	\]
	Suppose that $[c]\in B_a\cap B_b$. By
	Lemma~\ref{lem:radical-divisibility},
	\[
	\sqrt{aR}\subseteq\sqrt{cR}
	\qquad\text{and}\qquad
	\sqrt{bR}\subseteq\sqrt{cR}.
	\]
	If $[x]\in B_c$, then
	\[
	\sqrt{cR}\subseteq\sqrt{xR},
	\]
	and therefore $[x]\in B_a\cap B_b$. Thus
	\[
	[c]\in B_c\subseteq B_a\cap B_b,
	\]
	which proves that $\mathfrak B$ is a basis.
	
	Let $O$ be an open set containing $[a]$. Choose $b\in R^\#$ such that
	\[
	[a]\in B_b\subseteq O.
	\]
	Then
	\[
	\sqrt{bR}\subseteq\sqrt{aR},
	\]
	and Lemma~\ref{lem:radical-divisibility} gives $B_a\subseteq B_b$.
	Hence
	\[
	B_a\subseteq O,
	\]
	so $B_a$ is the smallest open neighbourhood of $[a]$. Therefore every
	point has a smallest open neighbourhood, and $GD(R)$ is Alexandrov.
\end{proof}

The ordinary divisor topology can be defined on an arbitrary commutative ring
by using the basis
\[
U_a=\{[b]\in EC(R^\#):b\mid a\},
\qquad a\in R^\#.
\]
The next proposition records the relation between the two topologies.

\begin{proposition}\label{prop:D-GD}
	Let $R$ be a ring and $a\in R^\#$.
	\begin{enumerate}
		\item If $a$ is not nilpotent, then
		\[
		B_a=\bigcup_{n\geq1}U_{a^n}.
		\]
		\item If $a$ is nilpotent, then
		\[
		B_a=EC(R^\#).
		\]
		\item Every $GD(R)$-open set is $D(R)$-open. Hence $GD(R)$ is coarser
		than $D(R)$.
	\end{enumerate}
\end{proposition}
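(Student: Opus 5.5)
Each of the three parts comes straight from the definitions and Lemma~\ref{lem:radical-divisibility}, so the plan is simply to unwind them in order.

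For (1), let $a\in R^\#$ be nonnilpotent. Then $a^n\neq 0$ for every $n\geq1$. Also $a^n$ is a nonunit, since $a$ is. Hence $a^n\in R^\#$, and $U_{a^n}$ is defined. By definition, $[b]\in B_a$ holds exactly when $b\mid a^n$ for some $n\geq1$, which is exactly the statement that $[b]\in U_{a^n}$ for some $n$. This gives the equality $B_a=\bigcup_{n\geq1}U_{a^n}$. The only point needing care is that $a^n\in R^\#$; this is where nonnilpotency is used.

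For (2), let $a$ be nilpotent with $a^m=0$. For any $b\in R^\#$ we have $b\mid 0=a^m$, so $[b]\in B_a$. Equivalently, $\sqrt{aR}=\sqrt{0}\subseteq\sqrt{bR}$, and we can conclude via Lemma~\ref{lem:radical-divisibility}. Hence $B_a=EC(R^\#)$.

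For (3), it suffices to show that each basic set $B_a$ is $D(R)$-open, since every $GD(R)$-open set is a union of such sets. If $a$ is nonnilpotent, part (1) writes $B_a$ as a union of the $D(R)$-basic sets $U_{a^n}$. If $a$ is nilpotent, part (2) gives $B_a=EC(R^\#)$, which is open in any topology. Note that the $U_a$ do form a basis for $D(R)$: they cover, because $[a]\in U_a$, and if $[c]\in U_a\cap U_b$ then $c\mid a$ and $c\mid b$, so $U_c\subseteq U_a\cap U_b$ by transitivity of divisibility. This step is implicitly assumed in the paper's definition of $D(R)$ anyway. I expect no real obstacle; the only subtle point is the case split in (3) forced by nilpotents, where $a^n$ may leave $R^\#$.
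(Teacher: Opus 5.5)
Your proposal is correct and follows the paper's proof essentially step for step. Part (1) unwinds the definition using $a^n\in R^\#$ for nonnilpotent $a$, part (2) uses that every element of $R^\#$ divides $0=a^m$, and part (3) checks the basic open sets via the nilpotent/nonnilpotent case split. Your extra check that the $U_a$ form a basis for $D(R)$ is harmless; the paper takes it for granted.
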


\begin{proof}
	Assume first that $a$ is not nilpotent. Then $a^n\in R^\#$ for every
	$n\geq1$, and
	\[
	[x]\in B_a
	\Longleftrightarrow
	x\mid a^n \text{ for some }n\geq1
	\Longleftrightarrow
	[x]\in U_{a^n}\text{ for some }n\geq1.
	\]
	This proves (1).
	
	If $a$ is nilpotent, then, since $a\neq 0$, there exists $n\geq 2$ such that $a^n=0$. Every
	$b\in R^\#$ divides $0$, and hence $[b]\in B_a$. This proves (2).
	
	Finally, by (1) every basic open set $B_a$ with $a$ nonnilpotent is a
	union of $D(R)$-open sets, while by (2) a basic open set associated to a
	nilpotent element is the whole space. Thus every basic open set of $GD(R)$
	is open in $D(R)$.
\end{proof}

\subsection{The radical order and the Kolmogorov quotient}

Let $(P,\leq)$ be a partially ordered set. A subset $U$ of $P$ is called
an \emph{upper set} if $x\in U$ and $x\leq y$ imply $y\in U$. The family
of all upper sets forms an Alexandrov topology on $P$; with this convention,
the smallest open neighbourhood of $x\in P$ is the principal upper set
\[
\uparrow x=\{y\in P:x\leq y\}.
\]
We refer to this topology as the \emph{upper Alexandrov topology};
see \cite{Curry} and, for general background on Alexandrov spaces,
\cite{Alex,Arenas}.

We apply this construction to the partially ordered set
\[
\mathcal R_{\mathrm{pr}}(R)
=
\{\sqrt{aR}:a\in R^\#\},
\]
ordered by inclusion. Thus, for $P\in\mathcal R_{\mathrm{pr}}(R)$, its
smallest open neighbourhood is
\[
\uparrow P
=
\{Q\in\mathcal R_{\mathrm{pr}}(R):P\subseteq Q\}.
\]

Recall that two points $x$ and $y$ of a topological space $X$ are
\emph{topologically indistinguishable}, written $x\equiv y$, if they have
the same open neighbourhoods. The quotient of $X$ by this equivalence
relation, endowed with the quotient topology, is called the
\emph{Kolmogorov quotient} (or $T_0$-identification) of $X$ and will be
denoted by $KQ(X)$; see \cite[Section 13C]{Willard}. The space $KQ(X)$
is always $T_0$.

\begin{theorem}\label{thm:KQ}
	Let $R$ be a ring and define
	\[
	\rho:EC(R^\#)\longrightarrow\mathcal R_{\mathrm{pr}}(R),
	\qquad
	\rho([a])=\sqrt{aR}.
	\]
	Then the following statements hold.
	\begin{enumerate}
		\item The map $\rho$ is well defined and surjective, and
		\[
		B_a=\rho^{-1}\bigl(\uparrow\sqrt{aR}\bigr)
		\]
		for every $a\in R^\#$.
		\item For $a,b\in R^\#$, the points $[a]$ and $[b]$ are topologically
		indistinguishable in $GD(R)$ if and only if
		\[
		\sqrt{aR}=\sqrt{bR}.
		\]
		\item The map $\rho$ induces a homeomorphism
		\[
		KQ(GD(R))
		\cong
		\mathcal R_{\mathrm{pr}}(R)
		\]
		where $\mathcal R_{\mathrm{pr}}(R)$ carries the upper Alexandrov topology.
	\end{enumerate}
\end{theorem}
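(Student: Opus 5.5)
The plan is to reduce everything to Lemma~\ref{lem:radical-divisibility} and Proposition~\ref{prop:basis}.

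For (1), well-definedness follows because $aR=bR$ implies $\sqrt{aR}=\sqrt{bR}$. Surjectivity holds by the definition of $\mathcal R_{\mathrm{pr}}(R)$. For the identity $B_a=\rho^{-1}(\uparrow\sqrt{aR})$, Lemma~\ref{lem:radical-divisibility} gives $[b]\in B_a$ iff $\sqrt{aR}\subseteq\sqrt{bR}$, which is exactly $\rho([b])\in\uparrow\sqrt{aR}$.

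For (2), we use that $GD(R)$ is Alexandrov with $B_a$ the smallest open neighbourhood of $[a]$ (Proposition~\ref{prop:basis}). Two points $[a],[b]$ are then indistinguishable iff $B_a=B_b$: every open set containing $[a]$ contains $B_a$, hence $[b]$ if $B_a=B_b$, and conversely. By the second part of Lemma~\ref{lem:radical-divisibility}, $B_a=B_b$ holds iff $\sqrt{aR}=\sqrt{bR}$.

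For (3), by (2) the fibres of $\rho$ are exactly the indistinguishability classes. Hence $\rho$ factors through a bijection $\bar\rho:KQ(GD(R))\to\mathcal R_{\mathrm{pr}}(R)$. It remains to show that $\rho$ is a quotient map onto the upper Alexandrov topology, i.e.\ that $V\subseteq\mathcal R_{\mathrm{pr}}(R)$ is an upper set iff $\rho^{-1}(V)$ is open in $GD(R)$.

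\begin{itemize}
\item If $V$ is an upper set, then $V=\bigcup_{P\in V}\uparrow P$, so $\rho^{-1}(V)=\bigcup B_a$ over $a$ with $\sqrt{aR}\in V$, using (1). This is open.
\item Conversely, suppose $\rho^{-1}(V)$ is open, $\sqrt{aR}\in V$ and $\sqrt{aR}\subseteq Q=\sqrt{bR}$. Then $[a]\in\rho^{-1}(V)$, so $B_a\subseteq\rho^{-1}(V)$ by minimality of $B_a$. Since $[b]\in B_a$ by Lemma~\ref{lem:radical-divisibility}, we get $Q=\rho([b])\in V$, so $V$ is an upper set.
\end{itemize}

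Thus the quotient topology induced by $\rho$ equals the upper Alexandrov topology. Since the Kolmogorov quotient carries the quotient topology for the same partition, $\bar\rho$ is a homeomorphism.

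The only step needing care is this last identification of the quotient topology, namely the converse direction that saturated open sets map to upper sets. That direction uses minimality of $B_a$ and surjectivity of $\rho$. Everything else is direct unwinding of definitions.
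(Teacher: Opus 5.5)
Your proposal is correct and follows essentially the paper's proof. Parts (1) and (2) are argued identically, via Lemma~\ref{lem:radical-divisibility} and the minimality of $B_a$. For (3), the paper factors $\rho$ through $q$ and checks continuity and openness of $\overline{\rho}$ on the basic sets $B_a$ and $\uparrow\sqrt{aR}$, which is the same content as your verification that $V$ is an upper set exactly when $\rho^{-1}(V)$ is open.
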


\begin{proof}
	$(1)$ If $[a]=[b]$, then $aR=bR$, and hence
	$\sqrt{aR}=\sqrt{bR}$. Thus $\rho$ is well defined, and surjectivity is
	immediate from the definition of $\mathcal R_{\mathrm{pr}}(R)$.
	
	By Lemma~\ref{lem:radical-divisibility},
	\[
	[b]\in B_a
	\Longleftrightarrow
	\sqrt{aR}\subseteq\sqrt{bR},
	\]
	which proves
	\[
	B_a=\rho^{-1}\bigl(\uparrow\sqrt{aR}\bigr).
	\]
	
	$(2)$ Since $B_a$ is the smallest open neighbourhood of $[a]$, two points are
	topologically indistinguishable if and only if their smallest open
	neighbourhoods coincide. Thus
	\[
	[a]\equiv[b]
	\Longleftrightarrow
	B_a=B_b.
	\]
	Using the preceding formula, or equivalently
	Lemma~\ref{lem:radical-divisibility}, this is equivalent to
	\[
	\sqrt{aR}=\sqrt{bR}.
	\]

	$(3)$ The fibres of $\rho$ are exactly the topological-indistinguishability
	classes. It follows that $\rho$ is constant on the equivalence classes of
	topological indistinguishability. Hence $\rho$ factors uniquely through
	the quotient map
	\[
	q:GD(R)\longrightarrow KQ(GD(R))
	\]
	to give a map
	\[
	\overline{\rho}:KQ(GD(R))
	\longrightarrow
	\mathcal R_{\mathrm{pr}}(R),
	\qquad
	\overline{\rho}([[a]])=\sqrt{aR}.
	\]
	By part~(2),
	\[
	[[a]]=[[b]]
	\quad\Longleftrightarrow\quad
	\sqrt{aR}=\sqrt{bR},
	\]
	so $\overline{\rho}$ is injective; its surjectivity follows from that of
	$\rho$. Thus $\overline{\rho}$ is a bijection.
	
	It remains to compare the two topologies. Since
	\[
	\rho=\overline{\rho}\circ q
	\]
	and
	\[
	B_a=\rho^{-1}\bigl(\uparrow\sqrt{aR}\bigr),
	\]
	we have
	\[
	B_a
	=
	q^{-1}\!\left(
	\overline{\rho}^{-1}
	\bigl(\uparrow\sqrt{aR}\bigr)
	\right).
	\]
	As $B_a$ is open in $GD(R)$, the definition of the quotient topology
	implies that
	\[
	\overline{\rho}^{-1}
	\bigl(\uparrow\sqrt{aR}\bigr)
	\]
	is open in $KQ(GD(R))$. Since the principal upper sets form a basis for
	the upper Alexandrov topology on $\mathcal R_{\mathrm{pr}}(R)$,
	$\overline{\rho}$ is continuous.
	
	Moreover, each $B_a$ is saturated with respect to topological
	indistinguishability. Indeed, if $[b]\in B_a$ and $[b]\equiv[c]$, then
	\[
	\sqrt{bR}=\sqrt{cR},
	\]
	and hence $[c]\in B_a$. Therefore,
	\[
	q^{-1}(q(B_a))=B_a,
	\]
	so $q(B_a)$ is open in $KQ(GD(R))$. Furthermore,
	\[
	\overline{\rho}(q(B_a))
	=
	\uparrow\sqrt{aR}.
	\]
	Thus $\overline{\rho}$ maps a basis of $KQ(GD(R))$ onto the basis of
	principal upper sets of $\mathcal R_{\mathrm{pr}}(R)$. Hence
	$\overline{\rho}$ is an open continuous bijection, and therefore a
	homeomorphism.
\end{proof}

\begin{corollary}\label{cor:closure}
	Let $R$ be a ring and $a\in R^\#$. Then
	\[
	\overline{\{[a]\}}
	=
	\{[b]\in EC(R^\#):
	\sqrt{bR}\subseteq\sqrt{aR}\}.
	\]
	Equivalently,
	\[
	\overline{\{[a]\}}
	=
	\{[b]\in EC(R^\#):
	a\mid b^n\text{ for some }n\geq1\}.
	\]
\end{corollary}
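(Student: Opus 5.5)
The plan is to use the standard description of closures in an Alexandrov space: a point $[b]$ lies in $\overline{\{[a]\}}$ exactly when every open neighbourhood of $[b]$ meets $\{[a]\}$. By Proposition~\ref{prop:basis}, $B_b$ is the smallest open neighbourhood of $[b]$, so it suffices to test the single set $B_b$. Thus
\[
[b]\in\overline{\{[a]\}}
\quad\Longleftrightarrow\quad
[a]\in B_b .
\]
For the direction $\Leftarrow$, note that any open set $O$ containing $[b]$ contains $B_b$, and hence contains $[a]$.

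Next we apply Lemma~\ref{lem:radical-divisibility} with the roles of the two elements interchanged. This gives
\[
[a]\in B_b\Longleftrightarrow\sqrt{bR}\subseteq\sqrt{aR},
\]
which is the first description of $\overline{\{[a]\}}$. The second description follows by unwinding the definition of $B_b$: the condition $[a]\in B_b$ means $a\mid b^n$ for some $n\geq1$.

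Alternatively, one could argue through Theorem~\ref{thm:KQ}. In the upper Alexandrov topology, the closure of a point $P$ is the down-set $\{Q:Q\subseteq P\}$, and this pulls back along $\rho$. However, the direct argument above is shorter.

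There is no real obstacle. The only point needing care is the direction of the inclusion: closure corresponds to the down-set because open sets are upper sets.
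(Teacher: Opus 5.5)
Your proof is correct and is essentially the paper's own argument. The paper first obtains the closure as the down-set via Theorem~\ref{thm:KQ}, then notes that by Proposition~\ref{prop:basis}, $[b]\in\overline{\{[a]\}}$ if and only if $[a]\in B_b$, i.e.\ $a\mid b^n$ for some $n\geq1$; you make that smallest-neighbourhood argument primary and the Kolmogorov-quotient route secondary, which is only a reversal of emphasis.
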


\begin{proof}
	In the upper Alexandrov topology on
	$\mathcal R_{\mathrm{pr}}(R)$, the closure of a point $P$ is
	\[
	\downarrow P=\{Q:Q\subseteq P\}.
	\]
	The result follows from Theorem~\ref{thm:KQ}. Equivalently, one may use
	Proposition~\ref{prop:basis}: $[b]\in\overline{\{[a]\}}$ if and only if
	the smallest open neighbourhood $B_b$ of $[b]$ contains $[a]$, which is
	equivalent to $a\mid b^n$ for some $n\geq1$.
\end{proof}

\section{Separation and comparison of divisor topologies}

The radical description obtained in the preceding section makes the
separation properties of $GD(R)$ particularly transparent. The possible
failure of the $T_0$-property is caused precisely by distinct principal
ideals having the same radical. We first record this observation and then
determine the rings for which this phenomenon does not occur.

We shall also use the following elementary fact concerning the ordinary
divisor topology. For integral domains, this was proved in
\cite[Proposition 2]{YigitKoc}; the same argument remains valid for
arbitrary commutative rings.

\begin{lemma}\label{lem:D-T0}
	Let $R$ be a ring. Then $D(R)$ is a $T_0$-space.
\end{lemma}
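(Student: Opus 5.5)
The plan is to mirror the argument for $GD(R)$ in Proposition~\ref{prop:basis} and Theorem~\ref{thm:KQ}, with ordinary divisibility in place of radical divisibility. First I will record that $\{U_a : a\in R^\#\}$ is indeed a basis and that $U_a$ is the smallest open neighbourhood of $[a]$ in $D(R)$.

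For the basis property, take $[c]\in U_a\cap U_b$, so that $c\mid a$ and $c\mid b$. If $[x]\in U_c$, then $x\mid c$, hence $x\mid a$ and $x\mid b$ by transitivity. This gives $[c]\in U_c\subseteq U_a\cap U_b$. For minimality, let $O$ be open with $[a]\in O$, and choose $U_b$ with $[a]\in U_b\subseteq O$. Then $a\mid b$, and any $x$ with $x\mid a$ also satisfies $x\mid b$, so $U_a\subseteq U_b\subseteq O$.

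Now suppose $[a]\neq[b]$, so $aR\neq bR$. Since $D(R)$ is Alexandrov, $[a]$ and $[b]$ are topologically indistinguishable exactly when $U_a=U_b$. Assume for contradiction that $U_a=U_b$. Because $[a]\in U_a=U_b$, we get $a\mid b$, i.e.\ $bR\subseteq aR$. Symmetrically, $[b]\in U_b=U_a$ gives $aR\subseteq bR$. Hence $aR=bR$, contradicting $[a]\neq[b]$. So one of $U_a$, $U_b$ is an open set containing exactly one of the two points, which proves that $D(R)$ is $T_0$.

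The only point needing care is that the whole argument uses only transitivity and reflexivity of divisibility together with the identification $aR=bR$. It never uses cancellation, so it holds verbatim in any commutative ring. Zero divisors cause no trouble, because every $U_a$ is a subset of $EC(R^\#)$ and no step requires $a^n\neq0$.
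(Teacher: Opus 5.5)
Your proof is correct and is essentially the paper's argument: both rest on the observation that $[a]\in U_b$ and $[b]\in U_a$ together force $aR=bR$. The paper skips your detour through the basis and smallest-neighbourhood properties. It simply notes that $a\nmid b$ (or $b\nmid a$) makes $U_b$ (or $U_a$) a separating open set, so that scaffolding, though correct, is not needed.
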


\begin{proof}
	Let $[a]\neq[b]$. Then $aR\neq bR$, so $a\nmid b$ or $b\nmid a$.
	If $a\nmid b$, then $[b]\in U_b$ while $[a]\notin U_b$.
	The other case is analogous. Hence $D(R)$ is $T_0$.
\end{proof}

\begin{proposition}\label{prop:T0}
	Let $R$ be a ring. The following statements are equivalent.
	\begin{enumerate}
		\item $GD(R)$ is a $T_0$-space.
		\item For all $a,b\in R^\#$,
		\[
		\sqrt{aR}=\sqrt{bR}
		\quad\Longrightarrow\quad
		aR=bR.
		\]
		\item The map
		\[
		\rho:EC(R^\#)\longrightarrow\mathcal R_{\mathrm{pr}}(R),
		\qquad
		\rho([a])=\sqrt{aR},
		\]
		is injective.
	\end{enumerate}
\end{proposition}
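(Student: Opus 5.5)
The plan is to obtain all three equivalences from Theorem~\ref{thm:KQ}(2), which says that $[a]\equiv[b]$ in $GD(R)$ exactly when $\sqrt{aR}=\sqrt{bR}$. Recall that a space is $T_0$ precisely when topological indistinguishability is the identity relation, that is, when $x\equiv y$ forces $x=y$. I will prove $(1)\Leftrightarrow(2)$ and $(2)\Leftrightarrow(3)$.

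For $(1)\Rightarrow(2)$, assume $GD(R)$ is $T_0$ and let $a,b\in R^\#$ satisfy $\sqrt{aR}=\sqrt{bR}$. By Theorem~\ref{thm:KQ}(2), $[a]\equiv[b]$. The $T_0$ property then gives $[a]=[b]$, which means $aR=bR$.

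For $(2)\Rightarrow(1)$, suppose $[a]\equiv[b]$. Theorem~\ref{thm:KQ}(2) gives $\sqrt{aR}=\sqrt{bR}$. Condition (2) then yields $aR=bR$, so $[a]=[b]$. Hence indistinguishable points coincide, and $GD(R)$ is $T_0$. If one prefers to avoid quoting the characterization of $T_0$ via $\equiv$, the same argument can be phrased with Proposition~\ref{prop:basis}: distinct points $[a]\neq[b]$ have $\sqrt{aR}\neq\sqrt{bR}$, so by Lemma~\ref{lem:radical-divisibility} one of $B_a$, $B_b$ contains one point and not the other.

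For $(2)\Leftrightarrow(3)$, this is just a restatement of the definitions. Injectivity of $\rho$ says that $\rho([a])=\rho([b])$ implies $[a]=[b]$. Since $\rho([a])=\sqrt{aR}$, and since $[a]=[b]$ means $aR=bR$, this is exactly condition (2). Well-definedness of $\rho$ is already given by Theorem~\ref{thm:KQ}(1).

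There is no real obstacle here. The only point needing care is the translation between the $T_0$ axiom and the statement that the indistinguishability classes are singletons. Alternatively, one can note that $\overline{\rho}\colon KQ(GD(R))\to\mathcal R_{\mathrm{pr}}(R)$ is a bijection and that the quotient map $q$ is injective iff $GD(R)$ is $T_0$; together with $\rho=\overline{\rho}\circ q$, this gives $(1)\Leftrightarrow(3)$ directly.
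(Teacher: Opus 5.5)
Your proposal is correct and follows essentially the same route as the paper: Theorem~\ref{thm:KQ}(2) identifies topological indistinguishability with equality of radicals, so $T_0$ amounts to ``$\sqrt{aR}=\sqrt{bR}\Rightarrow aR=bR$'', and that condition is by definition the injectivity of $\rho$. Your alternative phrasings, via Proposition~\ref{prop:basis} and Lemma~\ref{lem:radical-divisibility} or via $\rho=\overline{\rho}\circ q$ with $\overline{\rho}$ bijective, are also valid.
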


\begin{proof}
	By Theorem~\ref{thm:KQ}, two points $[a],[b]\in EC(R^\#)$ are
	topologically indistinguishable in $GD(R)$ if and only if
	\[
	\sqrt{aR}=\sqrt{bR}.
	\]
	Hence $GD(R)$ is $T_0$ if and only if equality of these radicals implies
	$[a]=[b]$, or equivalently $aR=bR$. This is precisely the injectivity of
	$\rho$.
\end{proof}

The preceding condition need not hold even for principal ideal domains.

\begin{example}\label{ex:Gaussian}
	Let $R=\mathbb Z[i]$. Since
	\[
	(1+i)\mid2
	\qquad\text{and}\qquad
	2\mid(1+i)^2,
	\]
	we have
	\[
	\sqrt{2R}=\sqrt{(1+i)R}.
	\]
	On the other hand,
	\[
	2R=(1+i)^2R\subsetneq(1+i)R,
	\]
	and hence
	\[
	[2]\neq[1+i].
	\]
	It follows from Proposition~\ref{prop:T0} that $GD(\mathbb Z[i])$ is not
	a $T_0$-space. In particular,
	\[
	GD(\mathbb Z[i])\neq D(\mathbb Z[i]).
	\]
\end{example}

We now give the ring-theoretic classification underlying
Proposition~\ref{prop:T0}. Recall that a commutative ring $R$ is
\emph{von Neumann regular} if for every $a\in R$ there exists $x\in R$
such that
\[
a=a^2x.
\]
Equivalently,
\[
aR=a^2R
\]
for every $a\in R$; see \cite{von,Brewer}.

\begin{theorem}\label{thm:T0-classification}
	Let $R$ be a ring. The following statements are equivalent.
	\begin{enumerate}
		\item $GD(R)$ is a $T_0$-space.
		
		\item For all $a,b\in R^\#$,
		\[
		\sqrt{aR}=\sqrt{bR}
		\quad\Longrightarrow\quad
		aR=bR.
		\]
		
		\item One of the following conditions holds:
		\begin{enumerate}
			\item $R$ is a von Neumann regular ring;
			\item $R$ is a local ring with nonzero maximal ideal
			\[
			\mathfrak m=\operatorname{Nil}(R)=aR
			\]
			for some $a\in R$, and $\mathfrak m^2=0$.
		\end{enumerate}
		
		\item $GD(R)=D(R)$.
	\end{enumerate}
\end{theorem}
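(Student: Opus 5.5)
The plan is to close the cycle $(1)\Leftrightarrow(2)\Rightarrow(3)\Rightarrow(4)\Rightarrow(1)$. The equivalence $(1)\Leftrightarrow(2)$ is exactly Proposition~\ref{prop:T0}. For $(4)\Rightarrow(1)$ we invoke Lemma~\ref{lem:D-T0}: $D(R)$ is always $T_0$, so if $GD(R)=D(R)$ then $GD(R)$ is $T_0$. The remaining work is $(2)\Rightarrow(3)$, which is the main obstacle, and $(3)\Rightarrow(4)$.

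For $(3)\Rightarrow(4)$, Proposition~\ref{prop:D-GD}(3) already shows that $GD(R)$ is coarser than $D(R)$, so we only need each $U_a$ to be $GD(R)$-open. We treat the two cases of (3) separately.
\begin{itemize}
\item If $R$ is von Neumann regular, then $R$ is reduced and $a^nR=aR$ for all $n\ge1$. Proposition~\ref{prop:D-GD}(1) then gives $B_a=\bigcup_n U_{a^n}=U_a$.
\item In case (b), any $x\in R^\#$ lies in $\mathfrak m=aR$, so $x=ar$ for some $r\in R$. If $r\in\mathfrak m$, then $x\in\mathfrak m^2=0$, which is impossible; hence $r$ is a unit and $xR=aR$. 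Thus $EC(R^\#)$ is a single point, and both topologies coincide trivially.
\end{itemize}

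For $(2)\Rightarrow(3)$ we first handle nonnilpotent elements. If $b\in R^\#$ is not nilpotent, then $b^2\in R^\#$ and $\sqrt{bR}=\sqrt{b^2R}$, so (2) gives $bR=b^2R$. If $\operatorname{Nil}(R)=0$, this holds for every nonzero nonunit, and trivially for units and $0$, so $R$ is von Neumann regular.

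Now suppose $\operatorname{Nil}(R)\neq0$. Every nonzero nilpotent $c$ satisfies $\sqrt{cR}=\operatorname{Nil}(R)$, so by (2) all nonzero nilpotents generate one and the same principal ideal $aR$. Since $\operatorname{Nil}(R)$ is the union of the principal ideals of its elements, $\operatorname{Nil}(R)=aR$.

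Next we show $a^2=0$. If $a^2\neq0$, then $a^2$ is a nonzero nilpotent, so $a^2R=aR$ and $a=a^2x$ for some $x$. Then $a(1-ax)=0$, and $1-ax$ is a unit because $ax$ is nilpotent. This forces $a=0$, a contradiction.

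It remains to show that every nonunit is nilpotent; this is the delicate step. Then $\operatorname{Nil}(R)$ is the unique maximal ideal, $R$ is local with $\mathfrak m=aR\neq0$, and $\mathfrak m^2=a^2R=0$. Suppose instead that $b\in R^\#$ is not nilpotent. Then $b=b^2y$ for some $y$, and $e=by$ is an idempotent with $e\neq0$ (since $be=b\neq0$) and $e\neq1$ (since $b$ is a nonunit). The elements $ae$ and $a(1-e)$ are nilpotent and cannot both be $0$. If, say, $ae\neq0$, then $aeR=aR$, so $a=aer$ and hence $a(1-e)=0$; after swapping $e$ and $1-e$ if necessary, we may assume $a=ae$.

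Now set $x=1-e+a$. Since $a^2=0$ and $a(1-e)=0$, we get $x^2=1-e$. Moreover $x\neq0$, and $x$ is a nonunit because $xe=a$ is nilpotent and $e\neq0$. Also $1-e\in R^\#$, since $e\neq0$ and $e\neq1$. From $x^2=1-e$ we get $\sqrt{xR}=\sqrt{(1-e)R}$, so (2) gives $xR=(1-e)R$. But every element of $(1-e)R$ is annihilated by $e$, while $xe=a\neq0$. This contradiction shows that every nonunit is nilpotent, which completes (3)(b).
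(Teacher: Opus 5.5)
Your proof is correct and follows essentially the same route as the paper: the same cycle of implications, the same reduced-case argument via $bR=b^2R$, and the same key element $x=a+(1-e)$ with $x^2=1-e$, which yields $xR=(1-e)R$ and a contradiction after multiplying by $e$. The differences are only in bookkeeping: you derive $a^2=0$ for the generator of $\operatorname{Nil}(R)$ rather than choosing a square-zero nilpotent at the outset, and you run the idempotent argument directly on $e=by$ instead of first proving that $R$ is indecomposable.
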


\begin{proof}
	The equivalence $(1)\Leftrightarrow(2)$ follows from
	Proposition~\ref{prop:T0}.
	
	Assume (2). We consider two cases.
	
	Suppose first that $R$ is reduced. Let $a\in R^\#$. Since $a$ is
	nonnilpotent, $a^2\neq0$, and
	\[
	\sqrt{aR}=\sqrt{a^2R}.
	\]
	By (2),
	\[
	aR=a^2R.
	\]
	The same equality is trivial when $a=0$ or $a$ is a unit. Hence
	$aR=a^2R$ for every $a\in R$, and therefore $R$ is von Neumann
	regular.
	
	Now suppose that $R$ is not reduced. Choose
	\[
	0\neq a\in\operatorname{Nil}(R)
	\]
	such that $a^2=0$. For every
	$0\neq x\in\operatorname{Nil}(R)$,
	\[
	\sqrt{xR}
	=
	\operatorname{Nil}(R)
	=
	\sqrt{aR}.
	\]
	Thus (2) gives
	\[
	xR=aR.
	\]
	It follows that
	\[
	\operatorname{Nil}(R)=aR
	\qquad\text{and}\qquad
	\operatorname{Nil}(R)^2=0.
	\]
	
	We next show that $R$ has no nontrivial idempotents. Suppose that
	$e^2=e$ with $e\neq0,1$. Since
	\[
	a=ae+a(1-e),
	\]
	at least one of $ae$ and $a(1-e)$ is nonzero. They cannot both be
	nonzero, because then both are nonzero nilpotents and hence
	\[
	aeR=aR=a(1-e)R,
	\]
	which is impossible after multiplying by $e$. Replacing $e$ by $1-e$
	if necessary, we may assume that
	\[
	a=ae\neq 0
	\qquad\text{and}\qquad
	a(1-e)=0.
	\]
	Put
	\[
	x=a+(1-e).
	\]
	Then $x$ is a nonzero nonunit and
	\[
	x^2=1-e.
	\]
	Hence
	\[
	\sqrt{xR}=\sqrt{x^2R}=\sqrt{(1-e)R}.
	\]
	By (2),
	\[
	xR=(1-e)R.
	\]
	Multiplying by $e$ gives $a=0$, a contradiction. Thus $R$ is
	indecomposable.
	
	Let $b\in R\setminus\operatorname{Nil}(R)$. We claim that $b$ is a
	unit. Suppose otherwise. Then $b\in R^\#$ and $b^2\in R^\#$, and
	\[
	\sqrt{bR}=\sqrt{b^2R}.
	\]
	By (2),
	\[
	bR=b^2R.
	\]
	Thus $b=b^2c$ for some $c\in R$. Setting $e=bc$, we obtain
	\[
	e^2=e
	\qquad\text{and}\qquad
	bR=eR.
	\]
	Since $b$ is a nonzero nonunit, $e$ is a nontrivial idempotent,
	contrary to the preceding paragraph. Therefore every element outside
	$\operatorname{Nil}(R)$ is a unit. It follows that $R$ is local with
	unique maximal ideal
	\[
	\mathfrak m=\operatorname{Nil}(R)=aR,
	\qquad
	\mathfrak m^2=0.
	\]
	This proves $(2)\Rightarrow(3)$.
	
	Assume (3). If $R$ is von Neumann regular, then
	\[
	aR=a^nR
	\]
	for every $a\in R$ and every $n\geq1$. Hence, for $a\in R^\#$,
	\[
	[x]\in B_a
	\quad\Longleftrightarrow\quad
	x\mid a^n\text{ for some }n\geq1
	\quad\Longleftrightarrow\quad
	x\mid a,
	\]
	and therefore
	\[
	B_a=U_a.
	\]
	Thus $GD(R)=D(R)$.
	
	Suppose instead that $R$ is local with
	\[
	\mathfrak m=\operatorname{Nil}(R)=aR\neq0
	\qquad\text{and}\qquad
	\mathfrak m^2=0.
	\]
	Let $0\neq x\in R^\#$. Then $x\in\mathfrak m$, so $x=ra$ for some
	$r\in R$. If $r\in\mathfrak m$, then
	\[
	x=ra\in\mathfrak m^2=0,
	\]
	a contradiction. Hence $r$ is a unit, and therefore
	\[
	xR=aR.
	\]
	Thus
	\[
	EC(R^\#)=\{[a]\},
	\]
	and both $D(R)$ and $GD(R)$ are the topology on this singleton. Hence
	again $GD(R)=D(R)$. This proves $(3)\Rightarrow(4)$.
	
	Finally, if $GD(R)=D(R)$, then Lemma~\ref{lem:D-T0} implies that
	$GD(R)$ is $T_0$. Hence $(4)\Rightarrow(1)$.
\end{proof}

The nonreduced alternative in Theorem~\ref{thm:T0-classification} is
closely related to the class of UN-rings. Recall that a commutative ring
$R$ is a \emph{UN-ring} if every nonunit is a product of a unit and a
nilpotent element. Equivalently, $\operatorname{Nil}(R)$ is a maximal
ideal; see \cite{Calug,Mimouni}. Thus the nonreduced rings occurring in
Theorem~\ref{thm:T0-classification} form a particularly restricted class
of UN-rings.

\begin{example}\label{ex:square-zero-local}
	The nonreduced alternative of Theorem~\ref{thm:T0-classification}
	occurs naturally.
	\begin{enumerate}
		\item If $p$ is prime and $R=\mathbb Z_{p^2}$, then
		\[
		\operatorname{Nil}(R)=\overline pR,
		\qquad
		(\overline pR)^2=0.
		\]
		Hence
		\[
		EC(R^\#)=\{[\overline p]\}
		\]
		and $GD(R)=D(R)$.
		
		\item Let $k$ be a field and
		\[
		R=k[[X]]/(X^2).
		\]
		If $x=X+(X^2)$, then
		\[
		\operatorname{Nil}(R)=xR,
		\qquad
		x^2=0.
		\]
		Again $EC(R^\#)=\{[x]\}$ and $GD(R)=D(R)$.
	\end{enumerate}
\end{example}

The classification also has the following ideal-theoretic consequence.

\begin{corollary}\label{cor:radical-ideals}
	Suppose that $GD(R)$ is a $T_0$-space. Then every nonzero ideal of $R$
	is radical. Consequently, for every nonzero proper ideal $I$ of $R$,
	the quotient $R/I$ is von Neumann regular.
\end{corollary}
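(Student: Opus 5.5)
Suppose $GD(R)$ is $T_0$. We use Theorem~\ref{thm:T0-classification}, which says $R$ is either von Neumann regular or local with nonzero maximal ideal $\mathfrak m=\operatorname{Nil}(R)=aR$ and $\mathfrak m^2=0$. We treat the two cases separately, show that every nonzero ideal $I$ satisfies $\sqrt I=I$, and then deduce the statement about $R/I$.

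\emph{Von Neumann regular case.} Let $I$ be an ideal and let $x\in\sqrt I$, say $x^n\in I$. Regularity gives $xR=x^nR$ for every $n\geq1$; this was already noted in the proof of Theorem~\ref{thm:T0-classification} and follows by induction from $x=x^2y$. Hence $x\in x^nR\subseteq I$, so $I$ is radical. This argument does not even need $I\neq0$, since a von Neumann regular ring is reduced.

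\emph{Local case.} Here $R^\#=\mathfrak m\setminus\{0\}$, and the proof of Theorem~\ref{thm:T0-classification} shows $xR=aR$ for every $0\neq x\in\mathfrak m$. So a nonzero proper ideal $I$ satisfies $I\subseteq\mathfrak m$ and contains some $x\neq0$, which forces $I=aR=\mathfrak m$. This is a maximal ideal, hence radical. The ideal $I=R$ is trivially radical. Thus every nonzero ideal is radical. The zero ideal is excluded here, since $\sqrt0=\mathfrak m\neq0$.

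\emph{Quotients.} Let $I$ be a nonzero proper ideal. Every ideal of $R/I$ has the form $J/I$ with $J\supseteq I$, so $J$ is nonzero and hence radical by the above. It follows that every ideal of $R/I$ is radical. Now take $\bar x\in R/I$. The ideal $\bar x^2(R/I)$ is radical and contains $\bar x^2$, so it contains $\bar x$. This gives $\bar x\in\bar x^2(R/I)$, which is exactly von Neumann regularity of $R/I$.

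The only step needing care is the local case, specifically excluding the zero ideal. The description of $EC(R^\#)$ as a singleton in the proof of Theorem~\ref{thm:T0-classification} settles it directly.
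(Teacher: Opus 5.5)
Your proposal is correct and follows the paper's proof: you split into the two cases of Theorem~\ref{thm:T0-classification}, show that $I=\mathfrak m$ for every nonzero proper ideal $I$ in the square-zero local case, and then pass radicality from ideals $J\supseteq I$ of $R$ to the ideals $J/I$ of $R/I$. You also fill in two steps the paper leaves implicit: the argument $x\in x^nR$ showing every ideal of a von Neumann regular ring is radical, and the observation $\bar x\in\sqrt{\bar x^{2}(R/I)}=\bar x^{2}(R/I)$, which turns ``every ideal radical'' into von Neumann regularity.
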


\begin{proof}
	By Theorem~\ref{thm:T0-classification}, either $R$ is von Neumann
	regular or $R$ is local with
	\[
	\mathfrak m=\operatorname{Nil}(R)=aR\neq0,
	\qquad
	\mathfrak m^2=0.
	\]
	In the first case every ideal of $R$ is radical.
	
	In the second case, let $0\neq I\subsetneq R$. Choose $0\neq x\in I$.
	Then $x\in\mathfrak m$, so $x=ra$ for some $r\in R$. As above,
	$x\neq0$ implies that $r$ is a unit. Hence
	\[
	xR=aR=\mathfrak m,
	\]
	and therefore $I=\mathfrak m$. Thus the only nonzero proper ideal is
	$\mathfrak m$, which is radical.
	
	Now let $I$ be a nonzero proper ideal of $R$. Every ideal of $R/I$
	has the form $J/I$ for some ideal $J$ containing $I$. Since $J$ is
	nonzero, it is radical. Hence every ideal of $R/I$ is radical, and
	therefore $R/I$ is von Neumann regular.
\end{proof}

We next turn to stronger separation axioms. Since $GD(R)$ is Alexandrov,
the $T_1$ condition forces discreteness.

\begin{theorem}\label{thm:discrete}
	Let $R$ be a ring. The following statements are equivalent.
	\begin{enumerate}
		\item $GD(R)$ is a $T_1$-space.
		\item $GD(R)$ is Hausdorff.
		\item $GD(R)$ is discrete.
		\item One of the following conditions holds:
		\begin{enumerate}
			\item $R$ is a field;
			\item $R=K\times F$ for some fields $K$ and $F$;
			\item $R$ is a local ring with nonzero maximal ideal
			\[
			\mathfrak m=\operatorname{Nil}(R)=aR
			\]
			and $\mathfrak m^2=0$.
		\end{enumerate}
	\end{enumerate}
\end{theorem}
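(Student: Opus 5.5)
We prove $(3)\Rightarrow(2)\Rightarrow(1)\Rightarrow(3)$ and then treat $(3)\Leftrightarrow(4)$ separately. The implications $(3)\Rightarrow(2)\Rightarrow(1)$ are standard. For $(1)\Rightarrow(3)$ we use that $GD(R)$ is Alexandrov (Proposition~\ref{prop:basis}). If $GD(R)$ is $T_1$, then every singleton is closed. By Corollary~\ref{cor:closure}, this means that $\sqrt{bR}\subseteq\sqrt{aR}$ forces $[b]=[a]$. Now let $[b]\in B_a$, so that $\sqrt{aR}\subseteq\sqrt{bR}$. Then $[a]\in\overline{\{[b]\}}=\{[b]\}$, so $B_a=\{[a]\}$. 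Every point is then open, and the space is discrete.

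Next we show that discreteness gives (4). A discrete space is $T_0$, so Theorem~\ref{thm:T0-classification} applies, and $R$ is either von Neumann regular or of type (c). Type (c) is already listed in (4), so we may assume $R$ is von Neumann regular. Discreteness says that $\sqrt{aR}\subseteq\sqrt{bR}$ implies $aR=bR$ for $a,b\in R^\#$. In a von Neumann regular ring every principal ideal is radical and generated by an idempotent. So the condition becomes: among nonzero proper principal ideals $eR$, with $e\neq0,1$ idempotent, there are no strict inclusions.

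Suppose $R$ is not a field. Pick an idempotent $e\neq0,1$, which exists because a nonfield von Neumann regular ring has a nonzero nonunit $a$ with $aR=eR$. Then $R\cong eR\times(1-e)R$. We claim each factor is a field. If $eR$ had a nontrivial idempotent $f$, then $fR\subsetneq eR$ would be a strict inclusion of nonzero proper principal ideals, which is impossible. So $eR$ is von Neumann regular with no nontrivial idempotents, and such a ring is a field; the same holds for $(1-e)R$. Hence $R\cong K\times F$. The main point to check here is that no strict inclusion exists; this rests on $e\neq f$ with $f=fe$, which gives $fR\subsetneq eR$.

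For $(4)\Rightarrow(3)$: if $R$ is a field, then $EC(R^\#)$ is empty, and the space is trivially discrete. If $R=K\times F$, then $R^\#$ consists of $(x,0)$ and $(0,y)$ with $x,y\neq0$, so $EC(R^\#)=\{[(1,0)],[(0,1)]\}$. These two radicals are incomparable, so $B_{(1,0)}=\{[(1,0)]\}$ and $B_{(0,1)}=\{[(0,1)]\}$, and the space is discrete. In case (c), the proof of Theorem~\ref{thm:T0-classification} shows that $EC(R^\#)$ is a singleton, which is discrete.
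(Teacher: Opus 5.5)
Your proof is correct and follows the paper's argument. Alexandrov plus $T_1$ gives discreteness (you spell this out via Corollary~\ref{cor:closure}), discreteness gives $T_0$ so Theorem~\ref{thm:T0-classification} reduces to the von Neumann regular case, and the converse checks are the same. The only local difference is how you show that the factors of $R\cong eR\times(1-e)R$ are fields. You exclude a nontrivial idempotent $f\in eR$ through the strict inclusion $fR\subsetneq eR$ and then use that a von Neumann regular ring without nontrivial idempotents is a field. The paper instead shows directly that every nonzero $x\in R_1$ is a unit, since otherwise $(x,1)$ would be a nonassociate divisor of $(x,0)$ lying in $B_{(x,0)}$.
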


\begin{proof}
	Every discrete space is Hausdorff and every Hausdorff space is $T_1$.
	Conversely, every Alexandrov $T_1$-space is discrete. Since $GD(R)$ is
	Alexandrov by Proposition~\ref{prop:basis}, conditions $(1)$--$(3)$ are
	equivalent.
	
	Assume that $GD(R)$ is discrete. Then it is $T_0$, so
	Theorem~\ref{thm:T0-classification} applies. If $R$ is in the
	nonreduced local case of that theorem, condition $(4)(c)$ holds.
	
	Suppose that $R$ is von Neumann regular. If $R$ is indecomposable, then
	a commutative von Neumann regular ring with no nontrivial idempotents is a
	field, giving $(4)(a)$.
	
	Assume that $R$ is decomposable. Then
	\[
	R=R_1\times R_2
	\]
	for some nonzero rings $R_1$ and $R_2$. We show that $R_1$ is a field.
	Let $0\neq x\in R_1$. The element $(x,0)$ is a nonzero nonunit of $R$.
	Since
	\[
	(x,0)=(x,1)(1,0),
	\]
	we have
	\[
	[(x,1)]\in U_{(x,0)}\subseteq B_{(x,0)}
	\]
	unless $(x,1)$ is a unit. Since $GD(R)$ is discrete,
	\[
	B_{(x,0)}=\{[(x,0)]\}.
	\]
	But
	\[
	R(x,1)\neq R(x,0),
	\]
	so $(x,1)$ must be a unit. Hence $x$ is a unit of $R_1$. Thus $R_1$
	is a field. Similarly, $R_2$ is a field, proving $(4)(b)$.
	
	Conversely, if $R$ is a field, then $EC(R^\#)=\emptyset$, so $GD(R)$
	is discrete. If $R=K\times F$ for fields $K$ and $F$, then
	\[
	EC(R^\#)
	=
	\{[(1,0)],[(0,1)]\},
	\]
	and
	\[
	B_{(1,0)}=\{[(1,0)]\},
	\qquad
	B_{(0,1)}=\{[(0,1)]\}.
	\]
	Hence $GD(R)$ is discrete.
	
	Finally, in the square-zero local case,
	\[
	EC(R^\#)=\{[a]\}
	\]
	by the proof of Theorem~\ref{thm:T0-classification}, so $GD(R)$ is
	again discrete.
\end{proof}

At the opposite extreme, $GD(R)$ may contain no nontrivial proper open
sets. The following result characterizes this situation.

\begin{theorem}\label{thm:indiscrete}
	Let $R$ be a ring. Then $GD(R)$ is indiscrete if and only if one of the
	following conditions holds.
	\begin{enumerate}
		\item $R$ is a UN-ring which is not an integral domain;
		\item $R$ is an integral domain with at most one nonzero prime ideal.
	\end{enumerate}
	In the second case, if $R$ is not a field, then $R$ is a local domain
	with a unique nonzero prime ideal.
\end{theorem}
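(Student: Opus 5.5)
By Theorem~\ref{thm:KQ}, the plan is to reduce the statement to a condition on $\mathcal R_{\mathrm{pr}}(R)$ and then read off the ring structure. A space is indiscrete if and only if its Kolmogorov quotient has at most one point. Moreover, the upper Alexandrov topology on a poset $P$ is indiscrete exactly when $|P|\le 1$: if $P\neq Q$ in $P$, we may assume $Q\not\subseteq P$, and then $\uparrow Q$ is a nonempty open set missing $P$. Hence $GD(R)$ is indiscrete if and only if all radicals $\sqrt{aR}$ with $a\in R^\#$ coincide. The case $R^\#=\emptyset$ (i.e.\ $R$ a field) is trivially included. I will then split according to whether $R$ is reduced.

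\emph{Nonreduced case.} Pick $0\neq a\in\operatorname{Nil}(R)$; then $\sqrt{aR}=\operatorname{Nil}(R)$. If all radicals coincide, then for every $b\in R^\#$ we get $b\in\sqrt{bR}=\operatorname{Nil}(R)$. So every nonunit is nilpotent, $\operatorname{Nil}(R)$ is the unique maximal ideal, and $R$ is a UN-ring, clearly not a domain. Conversely, let $R$ be a UN-ring that is not a domain. If $\operatorname{Nil}(R)=0$ were maximal, $R$ would be a field, so $\operatorname{Nil}(R)\neq0$. Then $R^\#=\operatorname{Nil}(R)\setminus\{0\}$, and every $\sqrt{bR}$ with $b\in R^\#$ equals $\operatorname{Nil}(R)$, so $|\mathcal R_{\mathrm{pr}}(R)|=1$.

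\emph{Reduced case.} Suppose all $\sqrt{aR}$ ($a\in R^\#$) equal a single ideal $I$. First, $R$ is a domain: if $ab=0$ with $a,b\neq0$, both lie in $R^\#$, so $b\in\sqrt{bR}=\sqrt{aR}$. Writing $b^n=ar$ gives $b^{n+1}=rab=0$, which contradicts reducedness. Next, every nonunit $b\neq0$ satisfies $b\in\sqrt{bR}=I$, so $I$ is exactly the set of nonunits. Hence $R$ is local with maximal ideal $\mathfrak m=I$. If $P$ is any nonzero prime and $0\neq a\in P$, then $\mathfrak m=\sqrt{aR}\subseteq P$, so $P=\mathfrak m$. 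Conversely, let $R$ be a domain with at most one nonzero prime. If it has none, $R$ is a field. Otherwise, the unique nonzero prime $\mathfrak m$ is the only maximal ideal, so $R$ is local. For $a\in R^\#$, $\sqrt{aR}$ is the intersection of the primes containing $a$, which is $\mathfrak m$. Thus $\mathcal R_{\mathrm{pr}}(R)$ is a singleton. This argument also yields the final sentence of the statement.

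The only step needing care is the first reduction. One must check that indiscreteness of $GD(R)$ is equivalent to indiscreteness of its Kolmogorov quotient, which holds because the quotient map induces a bijection between the open sets. One must also handle the degenerate case $R^\#=\emptyset$. Everything else is routine manipulation of radicals. The key trick is the domain argument in the reduced case ($b^{n+1}=0$).
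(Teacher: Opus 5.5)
Your proof is correct, and it differs from the paper's in organization and in one key step.

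For the first step, the paper never passes through the Kolmogorov quotient. It uses only that indiscreteness forces $B_a=EC(R^\#)$ for every $a\in R^\#$, and then applies Lemma~\ref{lem:radical-divisibility} one inclusion at a time. You instead first convert indiscreteness into the symmetric criterion that $\mathcal R_{\mathrm{pr}}(R)$ has at most one element. This is the same information, packaged once and then used in both directions.

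The more substantive difference is the case split. The paper splits into domain versus non-domain. To exclude a reduced non-domain, it takes a nonzero minimal prime $P$, shows that every $a\in R^\#$ lies in every minimal prime, and concludes $a\in\bigcap_{P\in\operatorname{Min}(R)}P=\operatorname{Nil}(R)=0$. That argument relies on the existence of minimal primes and on describing the nilradical as their intersection. Your split into reduced versus nonreduced disposes of this case with a one-line computation: $ab=0$ gives $b^n=ar$, hence $b^{n+1}=0$. This is strictly more elementary.

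In the domain case, the paper argues by contradiction from two distinct nonzero primes $P,Q$ with $a\in P\setminus Q$. You instead observe that the common radical is exactly the set of nonunits. So $R$ is local with that radical as maximal ideal, and every nonzero prime contains it. This proves the locality assertion in the theorem's final sentence directly, which the paper's proof leaves implicit.
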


\begin{proof}
	Assume first that $GD(R)$ is indiscrete.
	
	Suppose that $R$ is not an integral domain. We claim that every
	nonzero nonunit of $R$ is nilpotent.
	
	If $R$ is not reduced, choose
	\[
	0\neq x\in\operatorname{Nil}(R).
	\]
	For any $a\in R^\#$, indiscreteness gives
	\[
	B_a=EC(R^\#),
	\]
	and hence $[x]\in B_a$. By Lemma~\ref{lem:radical-divisibility},
	\[
	\sqrt{aR}\subseteq\sqrt{xR}
	=
	\operatorname{Nil}(R).
	\]
	Thus $a\in\operatorname{Nil}(R)$.
	
	It remains to rule out the case in which $R$ is reduced but not a
	domain. Let $P$ be a minimal prime ideal of $R$. Since $R$ is reduced
	and not a domain, every minimal prime is nonzero. Choose
	$0\neq x\in P$. For any $a\in R^\#$, again $[x]\in B_a$, and therefore
	\[
	\sqrt{aR}\subseteq\sqrt{xR}\subseteq P.
	\]
	Hence $a\in P$. Repeating this for every minimal prime gives
	\[
	a\in\bigcap_{P\in\operatorname{Min}(R)}P
	=
	\operatorname{Nil}(R)=0,
	\]
	a contradiction. Thus the reduced non-domain case cannot occur.
	
	Therefore, when $R$ is not a domain, every nonzero nonunit is nilpotent.
	Equivalently, $\operatorname{Nil}(R)$ is the unique maximal ideal, and
	hence $R$ is a UN-ring.
	
	Now suppose that $R$ is an integral domain. If $R$ is a field, there is
	nothing to prove. Assume that $R$ is not a field. If $P$ and $Q$ are
	distinct nonzero prime ideals, then, after interchanging them if
	necessary, choose
	\[
	a\in P\setminus Q
	\qquad\text{and}\qquad
	0\neq b\in Q.
	\]
	Indiscreteness gives
	\[
	B_a=B_b=EC(R^\#),
	\]
	so by Lemma~\ref{lem:radical-divisibility},
	\[
	\sqrt{aR}=\sqrt{bR}.
	\]
	But
	\[
	\sqrt{bR}\subseteq Q,
	\]
	and hence $a\in Q$, a contradiction. Thus $R$ has a unique nonzero
	prime ideal.
	
	Conversely, suppose that $R$ is a UN-ring which is not a domain. Then
	every nonzero nonunit is nilpotent. By
	Proposition~\ref{prop:D-GD},
	\[
	B_a=EC(R^\#)
	\]
	for every $a\in R^\#$, and hence $GD(R)$ is indiscrete.
	
	Finally, let $R$ be an integral domain with at most one nonzero prime
	ideal. If $R$ is a field, then $EC(R^\#)=\emptyset$, and the conclusion
	is immediate. Otherwise let $P$ be the unique nonzero prime ideal. For
	every $a\in R^\#$,
	\[
	\sqrt{aR}=P.
	\]
	Hence all basic open sets $B_a$ coincide. Since they cover
	$EC(R^\#)$, each of them is equal to the whole space. Therefore
	$GD(R)$ is indiscrete.
\end{proof}

\section{Isolated points and arithmetic applications}

We now study the isolated points of the generalized divisor topology.
We use the terminology from the factorization theory of rings with zero
divisors. Following \cite{AndersonValdesLeon,AndersonChunII}, a nonunit
$a\in R$ is called \emph{$m$-irreducible} if $aR$ is maximal in the set
of proper principal ideals of $R$. Thus, for $a\in R^\#$,
$a$ is $m$-irreducible if and only if
\[
aR\subseteq bR\subsetneq R
\quad\Longrightarrow\quad
aR=bR.
\]
For integral domains, $m$-irreducibility agrees with the usual notion of
irreducibility.

Recall also that an element $a\in R$ is called a
\emph{von Neumann regular element} if
\[
a=a^2x
\]
for some $x\in R$, equivalently if
\[
aR=a^2R.
\]

Since $B_a$ is the smallest open neighbourhood of $[a]$, the point
$[a]$ is isolated precisely when
\[
B_a=\{[a]\}.
\]
The following result gives an algebraic characterization of this
condition.

\begin{proposition}\label{prop:isolated}
	Let $R$ be a ring and let $[a]\in EC(R^\#)$. Then $[a]$ is an isolated
	point of $GD(R)$ if and only if one of the following conditions holds.
	\begin{enumerate}
		\item $R$ is a local ring with maximal ideal
		\[
		\mathfrak m=\operatorname{Nil}(R)=aR
		\]
		such that $a^2=0$. In this case,
		\[
		EC(R^\#)=\{[a]\}.
		\]
		
		\item $a$ is an $m$-irreducible von Neumann regular element.
	\end{enumerate}
\end{proposition}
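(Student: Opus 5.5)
We use the observation preceding the statement: since $B_a$ is the smallest open neighbourhood of $[a]$ (Proposition~\ref{prop:basis}), $[a]$ is isolated if and only if $B_a=\{[a]\}$. By Lemma~\ref{lem:radical-divisibility}, this means that every $b\in R^\#$ with $b\mid a^n$ for some $n\geq1$ satisfies $bR=aR$. The proof splits according to whether $a$ is nilpotent.

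\emph{Nilpotent case.} If $a$ is nilpotent, then $B_a=EC(R^\#)$ by Proposition~\ref{prop:D-GD}(2). So $[a]$ is isolated if and only if $EC(R^\#)=\{[a]\}$, that is, every nonzero nonunit generates $aR$. Assuming this, we derive (1). Since $a\neq0$ is nilpotent, some power $a^k$ with $k\geq 2$ is nonzero whenever $a^2\neq0$. Let $k$ be the largest exponent with $a^k\neq0$. If $k\geq2$, then $a^k\in R^\#$ and $a^kR=aR$, so $a=a^kr$. Then $a(1-a^{k-1}r)=0$, and since $1-a^{k-1}r$ is a unit, $a=0$, a contradiction. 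Hence $a^2=0$. Every nonunit $x$ is either $0$ or satisfies $xR=aR$, so the nonunits are exactly $aR$. This is an ideal, so $R$ is local with $\mathfrak m=aR=\operatorname{Nil}(R)$. Conversely, under (1), the argument in Theorem~\ref{thm:T0-classification} gives $EC(R^\#)=\{[a]\}$, and the point is isolated.

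\emph{Nonnilpotent case, forward direction.} If $a$ is not nilpotent, then $a^2\in R^\#$ and $[a^2]\in B_a$, so isolation gives $a^2R=aR$; thus $a$ is von Neumann regular. For $m$-irreducibility, suppose $aR\subseteq bR\subsetneq R$. Then $b\in R^\#$ (since $a\neq0$, $b\neq 0$), and $b\mid a$, so $[b]\in B_a$, forcing $bR=aR$.

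\emph{Nonnilpotent case, converse.} Suppose $a$ is $m$-irreducible and von Neumann regular. Then $a^nR=aR$ for all $n$, so $a$ is not nilpotent. (Note that a regular nonzero element is never nilpotent, so condition (2) automatically places us in this case.) If $[b]\in B_a$, then $b\mid a^n$, hence $aR=a^nR\subseteq bR\subsetneq R$, and $m$-irreducibility gives $bR=aR$. So $B_a=\{[a]\}$.

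The main obstacle is the nilpotent forward step, namely extracting $a^2=0$ and locality from $EC(R^\#)=\{[a]\}$. The unit trick with $1-a^{k-1}r$ handles the first part. Locality then follows because the set of nonunits equals the ideal $aR$.
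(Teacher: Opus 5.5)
Your proof is correct and follows the paper's argument essentially step by step. You reduce to $B_a=\{[a]\}$ and split on whether $a$ is nilpotent. Regularity comes from $[a^2]\in B_a$, $m$-irreducibility from $b\mid a$, and the converse from $aR=a^nR\subseteq bR$.

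The one difference is how you get $a^2=0$. You use the top nonzero power $a^k$ and the unit $1-a^{k-1}r$, whereas the paper iterates $aR=a^2R$ to reach $aR=a^nR=0$; both work. You could also add the one-line check that $aR\subseteq\operatorname{Nil}(R)$ because $a^2=0$.
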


\begin{proof}
	Suppose first that $[a]$ is isolated. Since $B_a$ is the smallest open
	neighbourhood of $[a]$, we have
	\[
	B_a=\{[a]\}.
	\]
	
	Assume first that $a$ is nilpotent. By
	Proposition~\ref{prop:D-GD},
	\[
	B_a=EC(R^\#),
	\]
	and hence
	\[
	EC(R^\#)=\{[a]\}.
	\]
	We claim that $a^2=0$. Indeed, if $a^2\neq0$, then
	$a^2\in R^\#$ and
	\[
	[a^2]\in B_a=\{[a]\}.
	\]
	Thus
	\[
	aR=a^2R.
	\]
	It follows successively that
	\[
	aR=a^nR
	\]
	for every $n\geq1$. Since $a$ is nilpotent, this eventually gives
	$aR=0$, contradicting $a\neq0$. Hence $a^2=0$.
	
	Now let $0\neq x\in R^\#$. Since
	$EC(R^\#)=\{[a]\}$, we have
	\[
	xR=aR.
	\]
	Thus every nonzero nonunit belongs to $aR$, and every element of $aR$
	is nilpotent since $a^2=0$. It follows that
	\[
	\operatorname{Nil}(R)=aR,
	\]
	and every element outside $aR$ is a unit. Hence $R$ is local with
	unique maximal ideal
	\[
	\mathfrak m=\operatorname{Nil}(R)=aR.
	\]
	This proves~(1).
	
	Now suppose that $a$ is not nilpotent. Then $a^2\in R^\#$ and
	\[
	[a^2]\in B_a=\{[a]\}.
	\]
	Hence
	\[
	aR=a^2R,
	\]
	so $a$ is a von Neumann regular element.
	
	Moreover,
	\[
	U_a\subseteq B_a=\{[a]\},
	\]
	and therefore
	\[
	U_a=\{[a]\}.
	\]
	Let
	\[
	aR\subseteq bR\subsetneq R.
	\]
	Since $aR\neq0$, we have $b\neq0$, and since $bR\neq R$, the element
	$b$ is a nonunit. Thus $b\in R^\#$ and $b\mid a$, so
	\[
	[b]\in U_a=\{[a]\}.
	\]
	Consequently,
	\[
	aR=bR.
	\]
	Hence $a$ is $m$-irreducible, proving~(2).
	
	Conversely, suppose that condition~(1) holds. Let
	$0\neq x\in R^\#$. Since $x\in\mathfrak m=aR$, write
	\[
	x=ra
	\]
	for some $r\in R$. If $r\in\mathfrak m$, then
	\[
	x=ra\in\mathfrak m^2=0,
	\]
	a contradiction. Hence $r$ is a unit, and therefore
	\[
	xR=aR.
	\]
	Thus
	\[
	EC(R^\#)=\{[a]\},
	\]
	and $[a]$ is isolated.
	
	Finally, suppose that $a$ is $m$-irreducible and von Neumann regular,
	and let $[b]\in B_a$. Then
	\[
	b\mid a^n
	\]
	for some $n\geq1$. Since
	\[
	aR=a^2R,
	\]
	we have
	\[
	aR=a^nR
	\]
	for every $n\geq1$. Hence
	\[
	aR=a^nR\subseteq bR.
	\]
	Since $b\in R^\#$, the ideal $bR$ is proper. By the
	$m$-irreducibility of $a$,
	\[
	aR=bR.
	\]
	Thus $[b]=[a]$, and hence
	\[
	B_a=\{[a]\}.
	\]
	Therefore $[a]$ is isolated.
\end{proof}

The preceding characterization shows a sharp difference between the
ordinary and generalized divisor topologies over integral domains.

\begin{corollary}\label{cor:no-isolated-domain}
	Let $R$ be an integral domain. Then $GD(R)$ has no isolated points.
\end{corollary}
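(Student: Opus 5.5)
We will deduce the corollary directly from Proposition~\ref{prop:isolated}. Let $R$ be an integral domain and suppose, for contradiction, that some $[a]\in EC(R^\#)$ is isolated in $GD(R)$. By Proposition~\ref{prop:isolated}, one of two alternatives must hold, and we rule out each in turn.

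\emph{Alternative (1).} It requires $\operatorname{Nil}(R)=aR$ with $a\neq0$. However, a domain is reduced, so $\operatorname{Nil}(R)=0$. This would force $a=0$, contradicting $a\in R^\#$. (Equivalently, $a^2=0$ with $a\neq0$ is impossible in a domain.)

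\emph{Alternative (2).} It requires $a$ to be a von Neumann regular element, so $a=a^2x$ for some $x\in R$. Then $a(1-ax)=0$. Since $R$ is a domain and $a\neq0$, we get $ax=1$. Thus $a$ is a unit, contradicting $a\in R^\#$. The $m$-irreducibility part of this alternative is never needed.

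Both alternatives fail, so no point of $GD(R)$ is isolated. If $R$ is a field, then $EC(R^\#)=\emptyset$ and the statement holds vacuously, so no separate case analysis is required.

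A more direct argument is also available, and we would mention it as a remark. For $a\in R^\#$ in a domain, $a^2\in R^\#$ and $[a^2]\in B_a$ by Lemma~\ref{lem:radical-divisibility}, since $\sqrt{aR}=\sqrt{a^2R}$. Moreover $[a^2]\neq[a]$, since $a^2R=aR$ would make $a$ a unit by cancellation. Hence $B_a\neq\{[a]\}$.

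None of the steps is a real obstacle. The only point needing care is the cancellation step $a(1-ax)=0\Rightarrow ax=1$, which is exactly where the domain hypothesis enters.
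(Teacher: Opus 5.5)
Your proof is correct and matches the paper's: you discard the nilpotent alternative of Proposition~\ref{prop:isolated} because a domain is reduced, then use cancellation to show that a von Neumann regular element $a=a^2x$ is a unit. Your closing remark, that $[a^2]\in B_a$ with $[a^2]\neq[a]$, is also valid and gives a direct argument that avoids the proposition.
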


\begin{proof}
	Suppose that $[a]$ is isolated. Since a domain has no nonzero nilpotent
	elements, Proposition~\ref{prop:isolated} implies that $a$ is a
	von Neumann regular element. Hence
	\[
	a=a^2x
	\]
	for some $x\in R$. Since $a\neq0$, cancellation gives
	\[
	1=ax,
	\]
	so $a$ is a unit, a contradiction.
\end{proof}

\begin{remark}
	For an integral domain, the behavior of the ordinary divisor topology is
	different. By \cite[Proposition 3]{YigitKoc}, the isolated points of
	$D(R)$ are precisely the classes of irreducible elements. Since
	$m$-irreducibility and ordinary irreducibility coincide in an integral
	domain, Proposition~\ref{prop:isolated} shows that the additional
	von Neumann regularity condition is precisely what eliminates isolated
	points from $GD(R)$.
\end{remark}

We next specialize the preceding characterization to principal ideal
rings.

\begin{lemma}\label{lem:m-irred-pir}
	Let $R$ be a principal ideal ring and let $a\in R^\#$. Then $a$ is
	$m$-irreducible if and only if $aR$ is a nonzero maximal ideal of $R$.
\end{lemma}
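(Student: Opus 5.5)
The proof splits into the two implications, and in both directions the key fact is that in a principal ideal ring every ideal is principal. This lets us pass freely between "maximal among proper principal ideals" and "maximal among proper ideals."

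\emph{Forward direction.} Suppose $a\in R^\#$ is $m$-irreducible. Then $aR\neq0$ because $a\neq0$, and $aR\neq R$ because $a$ is a nonunit. Let $M$ be a proper ideal with $aR\subseteq M$. Since $R$ is a principal ideal ring, we may write $M=bR$ for some $b\in R$. Because $bR\supseteq aR\neq0$ and $bR\neq R$, the element $b$ is a nonzero nonunit. Hence $aR\subseteq bR\subsetneq R$, and $m$-irreducibility gives $aR=bR=M$. Therefore $aR$ is maximal among proper ideals, that is, $aR$ is a nonzero maximal ideal of $R$.

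\emph{Reverse direction.} Suppose $aR$ is a nonzero maximal ideal, and let $aR\subseteq bR\subsetneq R$. Maximality of $aR$ immediately forces $bR=aR$. This is exactly the defining condition for $a$ to be $m$-irreducible. Nonzeroness of $a$ and $aR\neq R$ already place $a$ in $R^\#$, so no further hypotheses need checking. Note that this direction does not use the principal ideal hypothesis at all.

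There is no substantial obstacle here. The only point requiring care is in the forward direction: we must confirm that the generator $b$ of the intermediate ideal $M$ lies in $R^\#$, so that the $m$-irreducibility condition actually applies to it. This follows from $0\neq aR\subseteq bR\subsetneq R$. With that checked, both implications are immediate from the definitions.
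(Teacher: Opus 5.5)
Your proof is correct and takes essentially the same route as the paper: because every ideal is principal, maximality among proper principal ideals coincides with maximality among all proper ideals, and $a\neq0$ makes $aR$ nonzero. You write out the two implications separately and correctly observe that the reverse direction does not need the principal ideal hypothesis; the paper states the same argument in a single sentence.
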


\begin{proof}
	Since every ideal of $R$ is principal, the ideal $aR$ is maximal among
	the proper principal ideals if and only if it is maximal among all proper
	ideals. Since $a\neq0$, this is equivalent to $aR$ being a nonzero
	maximal ideal.
\end{proof}

For a topological space $X$, let $\operatorname{Iso}(X)$ denote the set
of its isolated points.

\begin{theorem}\label{thm:Zn-isolated}
	Let
	\[
	n=p_1^{m_1}\cdots p_k^{m_k},
	\]
	where $p_1,\ldots,p_k$ are distinct prime numbers and $m_i\geq1$.
	Then the following statements hold.
	\begin{enumerate}
		\item If $k=1$, then
		\[
		\operatorname{Iso}(GD(\mathbb Z_n))
		=
		\begin{cases}
			\{[\overline{p_1}]\}, & m_1=2,\\[2mm]
			\emptyset, & m_1\neq2.
		\end{cases}
		\]
		
		\item If $k\geq2$, then
		\[
		\operatorname{Iso}(GD(\mathbb Z_n))
		=
		\{[\overline{p_i}]:m_i=1\}.
		\]
		
		\item For the ordinary divisor topology,
		\[
		\operatorname{Iso}(D(\mathbb Z_n))
		=
		\begin{cases}
			\emptyset, & n \text{ is prime},\\[2mm]
			\{[\overline{p_i}]:1\leq i\leq k\}, & n \text{ is composite}.
		\end{cases}
		\]
	\end{enumerate}
\end{theorem}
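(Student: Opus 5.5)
The plan is to apply Proposition~\ref{prop:isolated} together with Lemma~\ref{lem:m-irred-pir}, since $\mathbb Z_n$ is a principal ideal ring. The argument goes in three steps: determine the $m$-irreducible elements, decide which of them are von Neumann regular, and then handle the nilpotent alternative~(1) of Proposition~\ref{prop:isolated}.

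By Lemma~\ref{lem:m-irred-pir}, an element $a\in R^\#$ of $R=\mathbb Z_n$ is $m$-irreducible exactly when $aR$ is a nonzero maximal ideal. The maximal ideals of $\mathbb Z_n$ are $\overline{p_i}\mathbb Z_n$, and each of these is nonzero unless $n=p_i$ is prime. Every element of $\mathbb Z_n$ generates the same ideal as $\overline d$ for some divisor $d$ of $n$. Hence the $m$-irreducible classes are precisely the $[\overline{p_i}]$ when $n$ is composite, and there are none when $n$ is prime.

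Next I check which of these are von Neumann regular, that is, when $\overline{p_i}\mathbb Z_n=\overline{p_i}^{\,2}\mathbb Z_n$. The ideal $\overline d\mathbb Z_n$ corresponds to $\gcd(d,n)$. Since $\gcd(p_i^2,n)=p_i^{\min(2,m_i)}$, regularity holds if and only if $m_i=1$.

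For alternative~(1), $\mathbb Z_n$ must be local, so $k=1$, with $\mathfrak m=\overline{p_1}\mathbb Z_n$ nonzero and $\mathfrak m^2=0$. This happens exactly when $m_1=2$, and then $\operatorname{Nil}(\mathbb Z_n)=\mathfrak m$, matching Example~\ref{ex:square-zero-local}(1).

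Now I assemble the cases. If $k=1$ and $m_1=1$, the ring is a field, so $EC(R^\#)=\emptyset$. If $k=1$ and $m_1\ge2$, then $[\overline{p_1}]$ is not regular, and only alternative~(1) can contribute; it gives $[\overline{p_1}]$ exactly when $m_1=2$. This proves~(1). If $k\ge2$, the ring is not local, so only alternative~(2) applies, and it gives $\{[\overline{p_i}]:m_i=1\}$. This proves~(2).

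For~(3), I use the ordinary divisor topology. Its basis sets $U_a$ give smallest neighbourhoods: if $[a]\in U_b$ then $a\mid b$, so $U_a\subseteq U_b$. Thus $[a]$ is isolated in $D(R)$ iff $U_a=\{[a]\}$, which means every nonunit nonzero divisor $b$ of $a$ satisfies $bR=aR$; that is exactly $m$-irreducibility of $a$. This is the argument in the proof of Proposition~\ref{prop:isolated}, read in both directions. By the first step, this gives $\emptyset$ when $n$ is prime and $\{[\overline{p_i}]\}$ when $n$ is composite.

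I expect the main care to be needed at the boundary cases rather than in any hard step. When $n=p^2$, the element $\overline p$ is nilpotent, so it is covered by alternative~(1) and not by~(2). When $n$ is prime, $EC(R^\#)$ is empty. I also need to confirm that each $[\overline{p_i}]$ lies in $R^\#$ when $n$ is composite, which holds because $\overline{p_i}\neq0$ there.
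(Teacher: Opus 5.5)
Your proof is correct and follows essentially the same route as the paper. You identify the $m$-irreducible classes of $\mathbb Z_n$ via Lemma~\ref{lem:m-irred-pir}, test von Neumann regularity by comparing $\gcd(p_i,n)$ with $\gcd(p_i^2,n)$, and isolate the nilpotent alternative of Proposition~\ref{prop:isolated}, which applies only when $n=p_1^2$. Your treatment of part~(3) through the smallest neighbourhoods $U_a$ is also the paper's argument, and you additionally spell out the prime case $EC(\mathbb Z_n^\#)=\emptyset$, which the paper leaves implicit.
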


\begin{proof}
	Suppose first that $k=1$, so that
	\[
	n=p_1^{m_1}.
	\]
	If $m_1=1$, then $\mathbb Z_n$ is a field and hence
	\[
	EC(\mathbb Z_n^\#)=\emptyset.
	\]
	Assume that $m_1\geq2$. Then $\mathbb Z_n$ is local with
	\[
	\operatorname{Nil}(\mathbb Z_n)
	=
	\overline{p_1}\mathbb Z_n.
	\]
	By Proposition~\ref{prop:isolated}, the nilpotent alternative produces
	an isolated point precisely when
	\[
	\overline{p_1}^{\,2}=0,
	\]
	which is equivalent to $m_1=2$.
	
	If $m_1>2$, the only possible $m$-irreducible class is
	$[\overline{p_1}]$, since
	$\overline{p_1}\mathbb Z_n$ is the unique maximal ideal. However,
	\[
	\overline{p_1}\mathbb Z_n
	\neq
	\overline{p_1}^{\,2}\mathbb Z_n,
	\]
	so $\overline{p_1}$ is not von Neumann regular. Thus no isolated point
	occurs. This proves~(1).
	
	Now assume that $k\geq2$. Since $\mathbb Z_n$ is a principal ideal
	ring, Lemma~\ref{lem:m-irred-pir} shows that its $m$-irreducible
	classes are precisely
	\[
	[\overline{p_1}],\ldots,[\overline{p_k}].
	\]
	For each $i$,
	\[
	\overline{p_i}\mathbb Z_n
	=
	\overline{p_i}^{\,2}\mathbb Z_n
	\]
	if and only if
	\[
	\gcd(p_i,n)=\gcd(p_i^2,n).
	\]
	Since
	\[
	\gcd(p_i,n)=p_i,
	\]
	this equality holds exactly when $m_i=1$. By
	Proposition~\ref{prop:isolated},
	\[
	\operatorname{Iso}(GD(\mathbb Z_n))
	=
	\{[\overline{p_i}]:m_i=1\}.
	\]
	This proves~(2).
	
	Finally, a point $[\overline a]$ is isolated in $D(\mathbb Z_n)$ if
	and only if
	\[
	U_{\overline a}=\{[\overline a]\}.
	\]
	This is equivalent to $\overline a$ being $m$-irreducible. By
	Lemma~\ref{lem:m-irred-pir}, these are precisely the generators of the
	nonzero maximal ideals of $\mathbb Z_n$, namely
	\[
	\overline{p_i}\mathbb Z_n,
	\qquad
	1\leq i\leq k.
	\]
	Hence
	\[
	\operatorname{Iso}(D(\mathbb Z_n))
	=
	\{[\overline{p_i}]:1\leq i\leq k\}.
	\]
\end{proof}

\begin{example}\label{ex:Z60}
	Let
	\[
	n=60=2^2\cdot3\cdot5.
	\]
	Then Theorem~\ref{thm:Zn-isolated} gives
	\[
	\operatorname{Iso}(GD(\mathbb Z_{60}))
	=
	\{[\overline3],[\overline5]\},
	\]
	whereas
	\[
	\operatorname{Iso}(D(\mathbb Z_{60}))
	=
	\{[\overline2],[\overline3],[\overline5]\}.
	\]
	Thus $[\overline2]$ is isolated in $D(\mathbb Z_{60})$ but not in
	$GD(\mathbb Z_{60})$.
\end{example}

\section{Order and finiteness properties}

The order-theoretic description of $GD(R)$ developed in Section~1
allows several global topological properties to be expressed in
algebraic terms. We first consider nestedness and then turn to
compactness, the Lindel\"of property, and Noetherianity.

\subsection{Nestedness}

Recall that a topological space is called \emph{nested} if its open
sets are linearly ordered by inclusion. For a space with a basis, it is
enough to require the basis elements to be linearly ordered; see
\cite[Lemma 1]{YigitKoc}.

Following Fuchs \cite{FuchsQuasiPrimary}, a proper ideal $I$ of a ring
$R$ is called \emph{quasi-primary} if $\sqrt{I}$ is a prime ideal.

\begin{theorem}\label{thm:nested}
	Let $R$ be a ring. The following statements are equivalent.
	\begin{enumerate}
		\item $GD(R)$ is a nested space.
		
		\item The poset
		\[
		\mathcal R_{\mathrm{pr}}(R)
		=
		\{\sqrt{aR}:a\in R^\#\}
		\]
		is linearly ordered by inclusion.
		
		\item The radical ideals of $R$ are linearly ordered by inclusion.
		
		\item The prime ideals of $R$ are linearly ordered by inclusion.
		
		\item Every proper ideal of $R$ is quasi-primary.
		
		\item The Zariski topology on $\operatorname{Spec}(R)$ is nested.
	\end{enumerate}
\end{theorem}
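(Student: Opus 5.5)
The plan is a chain of equivalences, with (4) as the hub. First, (1)$\Leftrightarrow$(2). By Proposition~\ref{prop:basis} the sets $B_a$ form a basis, so nestedness of $GD(R)$ amounts to linear ordering of the $B_a$ by inclusion (by \cite[Lemma 1]{YigitKoc}). Lemma~\ref{lem:radical-divisibility} says $B_a\subseteq B_b$ if and only if $\sqrt{bR}\subseteq\sqrt{aR}$. Hence the basis is a chain exactly when $\mathcal R_{\mathrm{pr}}(R)$ is a chain. The degenerate case $EC(R^\#)=\emptyset$ (a field) makes everything trivially true, since a field has a single prime.

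Next, the algebraic conditions:
\begin{itemize}
\item (3)$\Rightarrow$(4) and (3)$\Rightarrow$(2) are immediate, because primes and radicals of principal ideals are radical ideals.
\item (4)$\Rightarrow$(3): every radical ideal is the intersection of the primes containing it. Suppose radical ideals $I,J$ are incomparable, and pick $x\in I\setminus J$. Choose a prime $P\supseteq J$ with $x\notin P$. Since $I\not\subseteq P$, every prime $Q\supseteq I$ satisfies $Q\not\subseteq P$, so by linearity $P\subsetneq Q$. Then $J\subseteq P\subseteq\bigcap_{Q\supseteq I}Q=I$, and symmetrically, which is a contradiction.
\item (4)$\Leftrightarrow$(5): with primes in a chain, the set of primes containing a proper ideal $I$ is a nonempty chain. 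Its intersection is therefore prime (an intersection of a chain of primes is prime), so $\sqrt I$ is prime. Conversely, given primes $P,Q$, the ideal $P\cap Q$ is proper, so $\sqrt{P\cap Q}=P\cap Q$ is prime. Since $PQ\subseteq P\cap Q$, either $P\subseteq P\cap Q$ or $Q\subseteq P\cap Q$, giving comparability.
\item (4)$\Leftrightarrow$(6): the Zariski open sets are $\operatorname{Spec}(R)\setminus V(I)$. Linear ordering of the closed sets $V(I)$ clearly follows from (3), since $V(I)=V(\sqrt I)$ and the map $I\mapsto V(I)$ is order-reversing. Conversely, if the $V(I)$ form a chain, then so do the closures $V(P)=\overline{\{P\}}$ of points, and $V(P)\subseteq V(Q)$ holds iff $Q\subseteq P$.
\end{itemize}

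The main obstacle is (2)$\Rightarrow$(4), which passes from radicals of principal ideals to primes. Suppose $P,Q$ are incomparable primes, and take $a\in P\setminus Q$ and $b\in Q\setminus P$. Both are nonunits, and both are nonzero since $0$ lies in every prime. So $a,b\in R^\#$, and by (2), say $\sqrt{aR}\subseteq\sqrt{bR}$. Then $a\in\sqrt{bR}\subseteq Q$, a contradiction. The only care needed is that $a,b$ genuinely lie in $R^\#$, which the choice guarantees.

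This closes the cycle (1)$\Leftrightarrow$(2)$\Rightarrow$(4)$\Rightarrow$(3)$\Rightarrow$(2), together with (4)$\Leftrightarrow$(5) and (4)$\Leftrightarrow$(6).
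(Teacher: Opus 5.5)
Your proof is correct, but it is organized differently from the paper's. The paper runs the cycle $(1)\Leftrightarrow(2)\Rightarrow(3)\Rightarrow(4)\Rightarrow(6)\Rightarrow(2)$ together with $(4)\Leftrightarrow(5)$.

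The main difference is where the element-choice argument is used. The paper proves $(2)\Rightarrow(3)$ directly by exactly the argument you single out as the main obstacle. It applies it to two arbitrary incomparable radical ideals $I,J$ rather than to primes: choosing $x\in I\setminus J$ and $y\in J\setminus I$, comparability of $\sqrt{xR}$ and $\sqrt{yR}$ forces $x\in J$ or $y\in I$, since a radical ideal containing $y$ contains $\sqrt{yR}$.

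Consequently the paper never needs your step $(4)\Rightarrow(3)$, which writes radical ideals as intersections of primes. It obtains that implication indirectly as $(4)\Rightarrow(6)\Rightarrow(2)\Rightarrow(3)$:
\begin{itemize}
\item $(4)\Rightarrow(6)$ uses that each $V(I)$ is an upper set of the chain $\operatorname{Spec}(R)$;
\item $(6)\Rightarrow(2)$ uses $V(aR)\subseteq V(bR)\Leftrightarrow\sqrt{bR}\subseteq\sqrt{aR}$.
\end{itemize}

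What each route buys:
\begin{itemize}
\item Your decomposition gives a self-contained, purely ring-theoretic proof of $(3)\Leftrightarrow(4)$, independent of principal ideals and of the Zariski topology. It also handles condition $(6)$ through the point closures $V(P)=\overline{\{P\}}$.
\item The paper's route is more economical: a single element-choice argument already yields the strongest algebraic condition $(3)$ from $(2)$.
\end{itemize}

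Your $(4)\Leftrightarrow(5)$ matches the paper's in substance. The paper checks primality of $\bigcap_{P\in V(I)}P$ by hand instead of citing the chain-of-primes fact, and uses $xy\in P\cap Q$ where you use $PQ\subseteq P\cap Q$.
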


\begin{proof}
	By Lemma~\ref{lem:radical-divisibility},
	\[
	B_a\subseteq B_b
	\quad\Longleftrightarrow\quad
	\sqrt{bR}\subseteq\sqrt{aR}.
	\]
	Hence the basic open sets of $GD(R)$ are linearly ordered by inclusion
	if and only if $\mathcal R_{\mathrm{pr}}(R)$ is linearly ordered by
	inclusion. By the basis criterion for nested spaces,
	$(1)\Leftrightarrow(2)$.
	
	Assume (2), and let $I$ and $J$ be two radical ideals of $R$.
	Suppose that they are not comparable. Choose
	\[
	x\in I\setminus J
	\qquad\text{and}\qquad
	y\in J\setminus I.
	\]
	Then $x,y\in R^\#$. By (2), either
	\[
	\sqrt{xR}\subseteq\sqrt{yR}
	\qquad\text{or}\qquad
	\sqrt{yR}\subseteq\sqrt{xR}.
	\]
	In the first case,
	\[
	x\in\sqrt{xR}\subseteq\sqrt{yR}\subseteq J,
	\]
	a contradiction. The second case similarly gives $y\in I$.
	Thus the radical ideals of $R$ are linearly ordered, proving
	$(2)\Rightarrow(3)$.
	
	The implication $(3)\Rightarrow(4)$ is immediate, since every prime
	ideal is radical.
	
	Assume (4), and let $I$ be a proper ideal of $R$. Since
	\[
	\sqrt{I}
	=
	\bigcap_{P\in V(I)}P
	\]
	and the prime ideals of $R$ are linearly ordered, $\sqrt{I}$ is prime.
	Indeed, suppose that
	\[
	ab\in\sqrt{I}
	\qquad\text{and}\qquad
	a,b\notin\sqrt{I}.
	\]
	Then there exist $P,Q\in V(I)$ such that
	\[
	a\notin P
	\qquad\text{and}\qquad
	b\notin Q.
	\]
	If, say, $P\subseteq Q$, then $b\notin P$, and hence
	$ab\notin P$, a contradiction. The case $Q\subseteq P$ is analogous.
	Therefore $\sqrt{I}$ is prime, so $I$ is quasi-primary. This proves
	$(4)\Rightarrow(5)$.
	
	Assume (5), and let $P,Q\in\operatorname{Spec}(R)$. The ideal
	$P\cap Q$ is proper and hence quasi-primary. Since $P\cap Q$ is
	radical,
	\[
	\sqrt{P\cap Q}=P\cap Q
	\]
	is prime. If neither $P\subseteq Q$ nor $Q\subseteq P$, choose
	\[
	x\in P\setminus Q
	\qquad\text{and}\qquad
	y\in Q\setminus P.
	\]
	Then
	\[
	xy\in P\cap Q,
	\]
	while neither $x$ nor $y$ belongs to $P\cap Q$, contradicting the
	primality of $P\cap Q$. Hence $P$ and $Q$ are comparable. Thus
	$(5)\Rightarrow(4)$.
	
	Assume (4). Since the prime ideals of $R$ are linearly ordered, any
	two Zariski closed subsets of $\operatorname{Spec}(R)$ are comparable.
	Indeed, each closed set $V(I)$ is an upper set in
	$\operatorname{Spec}(R)$ with respect to inclusion, and upper sets of
	a linearly ordered set are linearly ordered by inclusion. Hence the
	Zariski topology on $\operatorname{Spec}(R)$ is nested. This proves
	$(4)\Rightarrow(6)$.
	
	Finally, assume (6). For $a,b\in R^\#$, the closed sets
	$V(aR)$ and $V(bR)$ are comparable. Since
	\[
	V(aR)\subseteq V(bR)
	\quad\Longleftrightarrow\quad
	\sqrt{bR}\subseteq\sqrt{aR},
	\]
	the radicals $\sqrt{aR}$ and $\sqrt{bR}$ are comparable. Hence
	$(6)\Rightarrow(2)$.
\end{proof}

\begin{remark}
	For integral domains, the ordinary divisor topology behaves differently.
	By \cite[Theorem 2]{YigitKoc}, $D(R)$ is nested if and only if $R$ is
	a valuation domain. Theorem~\ref{thm:nested} shows that nestedness of
	$GD(R)$ is instead governed by the ordering of the prime spectrum.
\end{remark}

\subsection{Compactness, Lindel\"ofness, and Noetherianity}

We begin with an order-theoretic description of the covering properties
of $GD(R)$. 
Recall that a subset $C$ of a partially ordered set $P$ is called
\emph{coinitial} if for every $x\in P$ there exists $c\in C$ such that
$c\leq x$; see \cite[p.~46]{Roman}.

\begin{theorem}\label{thm:covering-coinitial}
	Let $R$ be a ring.
	\begin{enumerate}
		\item $GD(R)$ is compact if and only if
		$\mathcal R_{\mathrm{pr}}(R)$ has a finite coinitial subset.
		
		\item $GD(R)$ is Lindel\"of if and only if
		$\mathcal R_{\mathrm{pr}}(R)$ has a countable coinitial subset.
	\end{enumerate}
\end{theorem}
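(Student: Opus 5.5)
Both parts will follow from one observation: in an Alexandrov space, covering properties are decided by the basic sets $B_a$. Concretely, I would first prove the following. For a subfamily $\{a_i\}_{i\in I}\subseteq R^\#$,
\[
\bigcup_{i\in I}B_{a_i}=EC(R^\#)
\]
holds if and only if $\{\sqrt{a_iR}:i\in I\}$ is coinitial in $\mathcal R_{\mathrm{pr}}(R)$.

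To prove this equivalence, suppose first that the family is coinitial and let $[b]\in EC(R^\#)$. Choose $i$ with $\sqrt{a_iR}\subseteq\sqrt{bR}$. By Lemma~\ref{lem:radical-divisibility}, $[b]\in B_{a_i}$. Conversely, suppose the $B_{a_i}$ cover and let $P=\sqrt{bR}$ with $b\in R^\#$. Then $[b]\in B_{a_i}$ for some $i$, and Lemma~\ref{lem:radical-divisibility} again gives $\sqrt{a_iR}\subseteq\sqrt{bR}$. So the $B_{a_i}$ form a cover exactly when their radicals are coinitial.

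For (1), assume first that $GD(R)$ is compact. The family $\{B_a:a\in R^\#\}$ is an open cover, so it has a finite subcover $B_{a_1},\dots,B_{a_n}$. By the observation, $\{\sqrt{a_1R},\dots,\sqrt{a_nR}\}$ is a finite coinitial subset. Conversely, let $\{\sqrt{a_1R},\dots,\sqrt{a_nR}\}$ be finite and coinitial, and let $\mathcal O$ be any open cover. Each $[a_j]$ lies in some $O_j\in\mathcal O$. Since $B_{a_j}$ is the smallest open neighbourhood of $[a_j]$ by Proposition~\ref{prop:basis}, we have $B_{a_j}\subseteq O_j$. Hence
\[
EC(R^\#)=\bigcup_j B_{a_j}\subseteq\bigcup_j O_j,
\]
and $O_1,\dots,O_n$ is a finite subcover. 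Part (2) is the same argument with "countable" in place of "finite". The countable subcover of $\{B_a\}$ gives a countable coinitial set, and for the converse we choose one $O_j$ for each of countably many $a_j$.

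Two details need checking. The first is the empty space, when $R$ is a field. There $\mathcal R_{\mathrm{pr}}(R)=\emptyset$, the empty set is a finite coinitial subset, and the empty space is compact, so both sides hold. The second is that elements of the coinitial set must be represented as $\sqrt{aR}$ with $a\in R^\#$; this is automatic from the definition of $\mathcal R_{\mathrm{pr}}(R)$. Alternatively, one could pass through Theorem~\ref{thm:KQ}. Compactness and Lindelöfness are preserved by the Kolmogorov quotient, since open sets of $GD(R)$ are saturated, and in an upper Alexandrov topology on a poset these properties are exactly the existence of finite or countable coinitial subsets. The direct argument above avoids that detour. I do not expect a genuine obstacle here; the main point is the observation that covers by the $B_a$ correspond exactly to coinitial families of radicals.
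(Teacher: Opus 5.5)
Your proposal is correct and follows essentially the same route as the paper. The paper also first shows that a family $\{B_{a_i}\}$ covers $EC(R^\#)$ exactly when $\{\sqrt{a_iR}\}$ is coinitial, then extracts a finite (or countable) subcover of the basic cover, and for the converse uses that $B_{a_i}$ is the smallest open neighbourhood of $[a_i]$. Your extra remarks on the empty case and on the Kolmogorov-quotient alternative are accurate but not needed.
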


\begin{proof}
	Let $\{a_i:i\in\Lambda\}\subseteq R^\#$. Then
	\[
	EC(R^\#)=\bigcup_{i\in\Lambda}B_{a_i}
	\]
	if and only if for every $x\in R^\#$ there exists $i\in\Lambda$ such
	that
	\[
	[x]\in B_{a_i}.
	\]
	By Lemma~\ref{lem:radical-divisibility}, this is equivalent to
	\[
	\sqrt{a_iR}\subseteq\sqrt{xR}
	\]
	for some $i\in\Lambda$. Thus the family
	$\{B_{a_i}:i\in\Lambda\}$ covers $GD(R)$ if and only if
	\[
	\{\sqrt{a_iR}:i\in\Lambda\}
	\]
	is coinitial in $\mathcal R_{\mathrm{pr}}(R)$.
	
	Suppose first that $GD(R)$ is compact. The basic open cover
	\[
	EC(R^\#)=\bigcup_{a\in R^\#}B_a
	\]
	has a finite subcover
	\[
	EC(R^\#)
	=
	B_{a_1}\cup\cdots\cup B_{a_n}.
	\]
	Hence
	\[
	\{\sqrt{a_1R},\ldots,\sqrt{a_nR}\}
	\]
	is a finite coinitial subset of $\mathcal R_{\mathrm{pr}}(R)$.
	
Conversely, suppose that
\[
\{\sqrt{a_1R},\ldots,\sqrt{a_nR}\}
\]
is coinitial, and let $\mathcal U$ be an open cover of $GD(R)$.
For each $i$, choose $U_i\in\mathcal U$ such that $[a_i]\in U_i$.
Since $B_{a_i}$ is the smallest open neighbourhood of $[a_i]$,
\[
B_{a_i}\subseteq U_i.
\]
Moreover, coinitiality implies that for every $[x]\in EC(R^\#)$
there exists $i$ such that
\[
\sqrt{a_iR}\subseteq\sqrt{xR},
\]
and hence $[x]\in B_{a_i}$ by
Lemma~\ref{lem:radical-divisibility}. Therefore
\[
EC(R^\#)=\bigcup_{i=1}^n B_{a_i}
\subseteq
\bigcup_{i=1}^n U_i.
\]
Thus $\{U_1,\ldots,U_n\}$ forms a finite subcover. This proves~(1).
	
	The proof of~(2) is identical, with ``finite'' replaced by
	``countable''.
\end{proof}

For integral domains, finite coinitiality admits a classical
ring-theoretic interpretation. $G$-domains arise naturally in the
theory of Hilbert rings and Hilbert's Nullstellensatz, going back to
Goldman's work \cite{Goldman}; see also Kaplansky's treatment
\cite{Kaplansky}.

Let $R$ be a nonfield integral domain with quotient field $K$. Recall
that $R$ is called a \emph{$G$-domain} if $K$ is finitely generated as
an $R$-algebra. Equivalently,
\[
\bigcap_{\substack{P\in\operatorname{Spec}(R)\\P\neq(0)}}P
\neq(0).
\]
The next theorem identifies compactness of $GD(R)$ with this classical
ring-theoretic condition.

\begin{theorem}\label{thm:compact-G-domain}
	Let $R$ be a nonfield integral domain with quotient field $K$. Then the
	following statements are equivalent.
	\begin{enumerate}
		\item $GD(R)$ is compact.
		
		\item There exists $a\in R^\#$ such that
		\[
		B_a=EC(R^\#).
		\]
		
		\item
		\[
		\bigcap_{\substack{P\in\operatorname{Spec}(R)\\P\neq(0)}}P
		\neq(0).
		\]
		
		\item $K$ is finitely generated as an $R$-algebra; equivalently,
		$R$ is a $G$-domain.
	\end{enumerate}
\end{theorem}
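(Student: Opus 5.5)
The plan is to prove the cycle $(1)\Rightarrow(2)\Rightarrow(3)\Rightarrow(2)\Rightarrow(1)$ and then invoke the classical equivalence $(3)\Leftrightarrow(4)$ due to Goldman and Kaplansky, which the paper has already recorded as the definition of a $G$-domain.

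For $(1)\Rightarrow(2)$ I will use Theorem~\ref{thm:covering-coinitial}. Compactness gives $a_1,\dots,a_n\in R^\#$ such that $\{\sqrt{a_iR}\}$ is coinitial in $\mathcal R_{\mathrm{pr}}(R)$. Put $a=a_1\cdots a_n$. Since $R$ is a domain, $a\neq0$, and $a$ is a nonunit because each $a_i$ is; hence $a\in R^\#$. For every $i$ we have $a\in a_iR$, so $\sqrt{aR}\subseteq\sqrt{a_iR}$. Given $x\in R^\#$, choose $i$ with $\sqrt{a_iR}\subseteq\sqrt{xR}$; then $\sqrt{aR}\subseteq\sqrt{xR}$, so $[x]\in B_a$ by Lemma~\ref{lem:radical-divisibility}. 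Thus $B_a=EC(R^\#)$. The reverse implication $(2)\Rightarrow(1)$ is immediate: $\{\sqrt{aR}\}$ is then a finite coinitial set, or more directly, any open set containing $[a]$ contains $B_a$, which is the whole space.

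For $(2)\Rightarrow(3)$: if $B_a=EC(R^\#)$, then $\sqrt{aR}\subseteq\sqrt{xR}$ for every $x\in R^\#$. Let $P$ be a nonzero prime. Since $R$ is not a field, $P$ is proper, so we may choose $0\neq x\in P$; then $x\in R^\#$. We get $a\in\sqrt{aR}\subseteq\sqrt{xR}\subseteq P$, so $0\neq a$ lies in the intersection. For $(3)\Rightarrow(2)$: pick $0\neq a$ in the intersection. This $a$ is a nonunit, because the intersection lies in a maximal ideal, and maximal ideals are nonzero since $R$ is not a field. For any $x\in R^\#$, every prime containing $x$ is nonzero, so it contains $a$. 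Hence $a\in\bigcap_{P\in V(xR)}P=\sqrt{xR}$, giving $[x]\in B_a$.

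Finally, $(3)\Leftrightarrow(4)$ is Kaplansky's theorem (Theorem~18 of \cite{Kaplansky}). If a self-contained sketch is wanted, it runs as follows. If $K=R[1/u]$ for some $u$, then every nonzero prime $P$ satisfies $PR[1/u]=K$, so some power of $u$ lies in $P$; thus $u\in P$ and $u$ lies in the intersection. Conversely, if $0\neq u$ lies in every nonzero prime, then $R[1/u]$ has no nonzero primes, so it is a field and equals $K$. The only mild obstacle in the whole argument is remembering to check that the chosen elements actually lie in $R^\#$, which is where the non-field hypothesis is used.
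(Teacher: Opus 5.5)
Your proof is correct and follows the paper's argument essentially step for step: the product $a=a_1\cdots a_n$ for $(1)\Rightarrow(2)$, the choice of $0\neq x\in P$ for $(2)\Rightarrow(3)$, the identity $\sqrt{xR}=\bigcap_{P\in V(xR)}P$ for $(3)\Rightarrow(2)$, and a citation of Kaplansky for $(3)\Leftrightarrow(4)$. Your optional sketch of $(3)\Leftrightarrow(4)$ is also sound, but its first direction starts from $K=R[1/u]$ without justification; you should state that if $K=R[a_1/b_1,\dots,a_n/b_n]$, then $K=R[1/(b_1\cdots b_n)]$, since the latter ring contains every generator and lies inside $K$.
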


\begin{proof}
	$(1)\Rightarrow(2)$
	By Theorem~\ref{thm:covering-coinitial}, there exist
	$a_1,\ldots,a_n\in R^\#$ such that
	\[
	\{\sqrt{a_1R},\ldots,\sqrt{a_nR}\}
	\]
	is coinitial in $\mathcal R_{\mathrm{pr}}(R)$. Put
	\[
	a=a_1\cdots a_n.
	\]
	Since $R$ is a domain, $a\in R^\#$. Moreover,
	\[
	\sqrt{aR}
	=
	\bigcap_{i=1}^n\sqrt{a_iR}.
	\]
	Let $x\in R^\#$. By coinitiality, there exists $i$ such that
	\[
	\sqrt{a_iR}\subseteq\sqrt{xR}.
	\]
	Hence
	\[
	\sqrt{aR}\subseteq\sqrt{xR},
	\]
	and Lemma~\ref{lem:radical-divisibility} gives
	\[
	[x]\in B_a.
	\]
	Therefore
	\[
	B_a=EC(R^\#).
	\]
	
	$(2)\Rightarrow(1)$ is clear.
	
	$(2)\Rightarrow(3)$
	Let $P$ be a nonzero prime ideal of $R$, and choose
	$0\neq x\in P$. Then $x\in R^\#$ and, by (2),
	\[
	[x]\in B_a.
	\]
	Thus
	\[
	\sqrt{aR}\subseteq\sqrt{xR}\subseteq P.
	\]
	In particular, $a\in P$. Since $P$ was arbitrary,
	\[
	0\neq a\in
	\bigcap_{\substack{P\in\operatorname{Spec}(R)\\P\neq(0)}}P.
	\]
	
	$(3)\Rightarrow(2)$
	Choose
	\[
	0\neq a\in
	\bigcap_{\substack{P\in\operatorname{Spec}(R)\\P\neq(0)}}P.
	\]
	Since $R$ is not a field, $a$ is a nonunit, and hence $a\in R^\#$.
	Let $x\in R^\#$. Every prime ideal containing $xR$ is nonzero, and
	therefore contains $a$. Hence
	\[
	a\in\sqrt{xR},
	\]
	so
	\[
	\sqrt{aR}\subseteq\sqrt{xR}.
	\]
	By Lemma~\ref{lem:radical-divisibility},
	\[
	[x]\in B_a.
	\]
	Thus
	\[
	B_a=EC(R^\#).
	\]
	
	Finally, $(3)\Leftrightarrow(4)$ is the classical characterization of
	$G$-domains; see \cite{Kaplansky}.
\end{proof}

\begin{remark}
	Theorem~\ref{thm:compact-G-domain} identifies a topological property of
	$GD(R)$ with a classical finiteness condition in commutative algebra:
	\[
	GD(R)\text{ is compact}
	\quad\Longleftrightarrow\quad
	\operatorname{Frac}(R)\text{ is finitely generated as an }R\text{-algebra}.
	\]
\end{remark}

We next consider the Lindel\"of property. For UFDs, the countable
coinitial condition has a simple factorization-theoretic interpretation.

\begin{corollary}\label{cor:UFD-Lindelof}
	Let $R$ be a UFD. Then $GD(R)$ is Lindel\"of if and only if $R$ has
	at most countably many nonassociate prime elements.
\end{corollary}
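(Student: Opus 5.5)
We will apply Theorem~\ref{thm:covering-coinitial}(2), which says $GD(R)$ is Lindel\"of exactly when $\mathcal R_{\mathrm{pr}}(R)$ has a countable coinitial subset. The proof therefore reduces to describing $\mathcal R_{\mathrm{pr}}(R)$ for a UFD $R$. Fix a set $\mathcal P$ of representatives of the nonassociate prime elements of $R$. If $\mathcal P=\emptyset$, then $R$ is a field, $EC(R^\#)=\emptyset$, and both sides of the equivalence hold trivially. So assume $R$ is not a field.

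The key computation is the following. If $a=u\,p_1^{e_1}\cdots p_k^{e_k}$ with $u$ a unit, distinct $p_i\in\mathcal P$ and $e_i\geq1$, then
\[
\sqrt{aR}=p_1\cdots p_kR.
\]
Moreover, $\sqrt{aR}\subseteq\sqrt{bR}$ holds if and only if every prime dividing $b$ also divides $a$. Both facts follow from Lemma~\ref{lem:radical-divisibility}: $b\mid a^n$ for some $n$ exactly when the prime support of $b$ is contained in that of $a$. Consequently the assignment $\sqrt{aR}\mapsto\operatorname{supp}(a)\subseteq\mathcal P$ is an order-reversing isomorphism from $\mathcal R_{\mathrm{pr}}(R)$ onto the poset of nonempty finite subsets of $\mathcal P$ ordered by inclusion. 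Under this isomorphism, a subset $C$ of $\mathcal R_{\mathrm{pr}}(R)$ is coinitial if and only if the corresponding family $\mathcal F$ of finite supports is cofinal: every nonempty finite $S\subseteq\mathcal P$ is contained in some member of $\mathcal F$.

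Now suppose $\mathcal P$ is countable. Then the set of all nonempty finite subsets of $\mathcal P$ is countable and trivially cofinal, so $\mathcal R_{\mathrm{pr}}(R)$ itself is countable. Hence $GD(R)$ is Lindel\"of.

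Conversely, suppose $\mathcal F$ is a countable cofinal family of finite subsets. Applying cofinality to the singletons $\{p\}$ for $p\in\mathcal P$ shows that $\mathcal P\subseteq\bigcup\mathcal F$. This union is a countable union of finite sets, so $\mathcal P$ is countable.

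The only step needing real care is the radical computation $\sqrt{aR}=p_1\cdots p_kR$ together with the support criterion, and this is routine from unique factorization. So no serious obstacle is expected; the main task is bookkeeping through the order reversal between radicals and supports.
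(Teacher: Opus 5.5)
Your proof is correct and follows essentially the paper's route. The forward direction is identical: apply coinitiality to each principal prime $pR$, then use that each $a_i$ has only finitely many nonassociate prime divisors. For the converse, the paper observes directly that $EC(R^\#)$ is countable, whereas you observe that $\mathcal R_{\mathrm{pr}}(R)$ is countable and invoke Theorem~\ref{thm:covering-coinitial}(2); this is only a cosmetic difference, and your support isomorphism just makes the bookkeeping explicit.
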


\begin{proof}
	Suppose first that $GD(R)$ is Lindel\"of. By
	Theorem~\ref{thm:covering-coinitial}, there exist
	$a_1,a_2,\ldots\in R^\#$ such that
	\[
	\{\sqrt{a_iR}:i\geq1\}
	\]
	is coinitial in $\mathcal R_{\mathrm{pr}}(R)$.
	
	Let $p$ be a prime element of $R$. By coinitiality, there exists $i$
	such that
	\[
	\sqrt{a_iR}\subseteq pR.
	\]
	In particular,
	\[
	a_i\in pR,
	\]
	so $p\mid a_i$. Since each $a_i$ has only finitely many nonassociate
	prime divisors, the set of nonassociate prime elements of $R$ is a
	countable union of finite sets. Hence it is at most countable.
	
	Conversely, suppose that $R$ has at most countably many nonassociate
	prime elements. Every nonzero nonunit of $R$ has a factorization
	involving finitely many of these prime elements. It follows that
	$EC(R^\#)$ is at most countable. Hence every open cover of $GD(R)$
	has a countable subcover, and $GD(R)$ is Lindel\"of.
\end{proof}

\begin{remark}\label{rem:Lindelof-cardinality}
	The Lindel\"of property does not impose a corresponding cardinality
	restriction on $EC(R^\#)$ in arbitrary rings. Let $k$ be an
	uncountable field and put
	\[
	R=k[x,y]/(x,y)^2.
	\]
	Then $R$ is local with maximal ideal
	\[
	\mathfrak m=(\overline{x},\overline{y}),
	\]
	and
	\[
	\mathfrak m^2=0.
	\]
	Hence every nonzero nonunit of $R$ is nilpotent, so
	\[
	B_a=EC(R^\#)
	\]
	for every $a\in R^\#$. Thus $GD(R)$ is indiscrete, and therefore
	compact and Lindel\"of.
	
	Moreover, $\mathfrak m$ is a two-dimensional vector space over $k$.
	For every $0\neq a\in\mathfrak m$, we have
	\[
	aR=ka,
	\]
	since $\mathfrak m^2=0$. Consequently,
	\[
	[a]=[b]
	\quad\Longleftrightarrow\quad
	ka=kb
	\]
	for $0\neq a,b\in\mathfrak m$. Hence the elements of $EC(R^\#)$ are
	precisely the one-dimensional $k$-subspaces of $\mathfrak m$, and
	therefore
	\[
	EC(R^\#)\cong\mathbb P^1(k).
	\]
	Since $k$ is uncountable, $EC(R^\#)$ is uncountable.
\end{remark}

\begin{example}\label{ex:Lindelof-not-compact}
	Let
	\[
	R=\mathbb Q[x_1,x_2,\ldots].
	\]
	Then $R$ is a UFD with at most countably many nonassociate prime
	elements. Hence $GD(R)$ is Lindel\"of by
	Corollary~\ref{cor:UFD-Lindelof}.
	
	On the other hand,
	\[
	\bigcap_{i\geq1}x_iR=(0),
	\]
	where each $x_iR$ is a nonzero prime ideal. Thus
	\[
	\bigcap_{\substack{P\in\operatorname{Spec}(R)\\P\neq(0)}}P=(0),
	\]
	so $R$ is not a $G$-domain. By
	Theorem~\ref{thm:compact-G-domain}, $GD(R)$ is not compact.
\end{example}

We conclude this section with the Noetherian property. Recall that a
\emph{quasi-order} is a reflexive and transitive relation. A
quasi-ordered set $(Q,\leq)$ is called \emph{well-quasi-ordered}
(or a \emph{wqo}) if every infinite sequence
\[
q_1,q_2,\ldots
\]
contains indices $i<j$ such that
\[
q_i\leq q_j.
\]
We refer to \cite[Definition~2.7 and Lemma~2.1]{BuriolaSchuster}
for this notion and its standard equivalent formulations. In
particular, $Q$ is a wqo if and only if every upward closed subset of
$Q$ is finitely generated; equivalently, every subset of $Q$ has only
finitely many minimal elements.

A classical characterization states that a quasi-ordered set is
well-quasi-ordered if and only if its Alexandroff topology is
Noetherian; see
\cite[Proposition~9.7.17]{GoubaultLarrecq}.

\begin{corollary}\label{cor:Noetherian-wqo}
	Let $R$ be a ring. Then $GD(R)$ is Noetherian if and only if
	$\mathcal R_{\mathrm{pr}}(R)$, ordered by inclusion, is
	well-quasi-ordered.
\end{corollary}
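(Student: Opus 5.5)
The plan is to reduce to the Kolmogorov quotient and then apply the cited characterization of well-quasi-orders through Noetherian Alexandrov topologies. Noetherianity of a space means that its open sets satisfy the ascending chain condition (equivalently, every subspace is compact). This property depends only on the lattice of open sets. By Theorem~\ref{thm:KQ}, the quotient map $q:GD(R)\to KQ(GD(R))$ induces an inclusion-preserving bijection between the open sets of $KQ(GD(R))$ and those of $GD(R)$. Indeed, every open set of $GD(R)$ is a union of basic sets $B_a$, each of which is saturated, so $U\mapsto q^{-1}(U)$ is a lattice isomorphism. Hence $GD(R)$ is Noetherian if and only if $KQ(GD(R))$ is. Combined with the homeomorphism $KQ(GD(R))\cong\mathcal R_{\mathrm{pr}}(R)$ of Theorem~\ref{thm:KQ}(3), this says $GD(R)$ is Noetherian if and only if $\mathcal R_{\mathrm{pr}}(R)$, with its upper Alexandrov topology, is Noetherian.

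Next, I would invoke \cite[Proposition~9.7.17]{GoubaultLarrecq}: a quasi-ordered set is a wqo if and only if its Alexandroff topology is Noetherian. The one point to check is orientation. Goubault-Larrecq's Alexandroff topology consists of the upward closed sets, and the upper Alexandrov topology used here has as opens exactly the upper sets for $\subseteq$. So the conventions agree, and the corollary follows directly.

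As a sanity check, and as a self-contained alternative to citing the result, I would also sketch the direct argument. Suppose the open (upper) sets satisfy ACC and $(P_n)$ is a sequence in $\mathcal R_{\mathrm{pr}}(R)$. The chain $\uparrow\{P_1,\dots,P_n\}$ stabilizes at some stage $N$, so $P_{N+1}\in\uparrow P_i$ for some $i\le N$, giving $P_i\subseteq P_{N+1}$ with $i<N+1$. Conversely, if the order is a wqo, every upper set is finitely generated, since its minimal elements are finite in number and every element lies above one of them by well-foundedness. An ascending chain of upper sets then has a union generated by finitely many elements, which all appear at some finite stage, so the chain stabilizes.

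The only step needing real care is the first: that Noetherianity passes between $GD(R)$ and its Kolmogorov quotient. This is handled by the saturation of all open sets noted above, which the proof of Theorem~\ref{thm:KQ} already establishes for the basic sets $B_a$.
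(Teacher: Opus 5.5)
Your proposal is correct and follows essentially the same route as the paper: pass to the Kolmogorov quotient via Theorem~\ref{thm:KQ}, then apply \cite[Proposition~9.7.17]{GoubaultLarrecq}, whose Alexandroff topology consists of the upward closed sets, matching the upper Alexandrov topology used here. You also justify the transfer of Noetherianity along $q$, which the paper simply asserts, and you give a self-contained proof of the wqo/Noetherian equivalence; both are sound, and the appeal to the $B_a$ is not even needed, since every open set of any space is saturated for topological indistinguishability.
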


\begin{proof}
	By Theorem~\ref{thm:KQ},
	\[
	KQ(GD(R))\cong \mathcal R_{\mathrm{pr}}(R)
	\]
	with the upper Alexandrov topology. Since a topological space is
	Noetherian if and only if its Kolmogorov quotient is Noetherian, the
	result follows from
	\cite[Proposition~9.7.17]{GoubaultLarrecq}.
\end{proof}

\begin{remark}\label{rem:Noetherian-primes}
If $GD(R)$ is Noetherian, then
$\mathcal R_{\mathrm{pr}}(R)$ contains neither an infinite strictly
descending chain nor an infinite antichain. In particular, $R$ has no
infinite strictly descending chain of distinct principal prime ideals
and no infinite family of pairwise incomparable principal prime ideals.
\end{remark}

\begin{corollary}\label{cor:UFD-Noetherian}
	Let $R$ be a UFD. Then $GD(R)$ is Noetherian if and only if $R$ has
	only finitely many nonassociate prime elements.
\end{corollary}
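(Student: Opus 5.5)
By Corollary~\ref{cor:Noetherian-wqo}, it suffices to decide when $\mathcal R_{\mathrm{pr}}(R)$ is well-quasi-ordered under inclusion. For a UFD we first make this poset explicit. If $a\in R^\#$ factors as $a=u\,p_1^{e_1}\cdots p_r^{e_r}$ with pairwise nonassociate primes $p_i$ and $e_i\ge1$, then $\sqrt{aR}=p_1\cdots p_rR$. Indeed, $b\in\sqrt{aR}$ if and only if each $p_i$ divides $b$. Writing $S(a)=\{[p_1],\dots,[p_r]\}$ for the finite nonempty set of prime classes dividing $a$, we get
\[
\sqrt{aR}\subseteq\sqrt{bR}\iff S(b)\subseteq S(a).
\]
Hence $\mathcal R_{\mathrm{pr}}(R)$ is order-isomorphic to the set of finite nonempty subsets of the set $\mathcal P$ of nonassociate prime classes, ordered by \emph{reverse} inclusion. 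Every finite nonempty subset occurs, namely as $S$ of the product of its elements. Since $R$ is not a field, $\mathcal P\neq\emptyset$.

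Suppose $\mathcal P$ is finite with $|\mathcal P|=m$. Then $\mathcal R_{\mathrm{pr}}(R)$ has at most $2^m-1$ elements. Any finite poset is a wqo: in an infinite sequence some element repeats, giving $q_i\le q_j$ with $i<j$. So $GD(R)$ is Noetherian.

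Suppose instead $\mathcal P$ is infinite. Pick pairwise nonassociate primes $q_1,q_2,\dots$. The singletons $\{[q_i]\}$ are pairwise incomparable under reverse inclusion. Equivalently, the principal primes $q_iR=\sqrt{q_iR}$ form an infinite antichain in $\mathcal R_{\mathrm{pr}}(R)$. The sequence $q_1R,q_2R,\dots$ therefore has no $i<j$ with $q_iR\subseteq q_jR$, so the poset is not a wqo. This is exactly the situation excluded in Remark~\ref{rem:Noetherian-primes}, and $GD(R)$ is not Noetherian.

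The only step requiring care is the computation $\sqrt{aR}=p_1\cdots p_rR$ and the resulting description of the inclusion order. This rests on unique factorization: $b^n\in aR$ holds precisely when each $p_i$ divides $b$. Everything else is immediate from Corollary~\ref{cor:Noetherian-wqo}.
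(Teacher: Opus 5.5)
Your proof is correct and follows the paper's own argument. It goes through Corollary~\ref{cor:Noetherian-wqo}: the principal primes give an infinite antichain when there are infinitely many nonassociate primes, and otherwise $\sqrt{aR}=p_{i_1}\cdots p_{i_t}R$ makes $\mathcal R_{\mathrm{pr}}(R)$ finite. Your aside ``since $R$ is not a field'' is not among the hypotheses, but nothing depends on it, since for a field the poset is empty and your finite case applies with $m=0$.
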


\begin{proof}
	Suppose that $R$ has infinitely many nonassociate prime elements.
	The corresponding principal prime ideals form an infinite antichain
	in $\mathcal R_{\mathrm{pr}}(R)$. Hence
	$\mathcal R_{\mathrm{pr}}(R)$ is not well-quasi-ordered, and therefore
	$GD(R)$ is not Noetherian by
	Corollary~\ref{cor:Noetherian-wqo}.
	
	Conversely, suppose that, up to associates,
	\[
	p_1,\ldots,p_n
	\]
	are all the prime elements of $R$. For every $a\in R^\#$,
	\[
	a=u p_{i_1}^{e_1}\cdots p_{i_t}^{e_t}
	\]
	for some distinct $p_{i_1},\ldots,p_{i_t}$, and hence
	\[
	\sqrt{aR}
	=
	(p_{i_1}\cdots p_{i_t})R.
	\]
	Thus $\mathcal R_{\mathrm{pr}}(R)$ is finite, and therefore
	well-quasi-ordered. Hence $GD(R)$ is Noetherian by
	Corollary~\ref{cor:Noetherian-wqo}.
\end{proof}

\begin{corollary}\label{cor:UFD-finiteness}
	Let $R$ be a nonfield UFD. Then:
	\begin{enumerate}
		\item $GD(R)$ is Lindel\"of if and only if $R$ has at most
		countably many nonassociate prime elements.
		
		\item The following statements are equivalent:
		\begin{enumerate}
			\item $GD(R)$ is compact;
			\item $GD(R)$ is Noetherian;
			\item $R$ has only finitely many nonassociate prime elements.
		\end{enumerate}
	\end{enumerate}
\end{corollary}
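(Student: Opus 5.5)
Part (1) is exactly Corollary~\ref{cor:UFD-Lindelof}, so nothing new is needed there; I would simply cite it. For part (2), the equivalence (b)$\Leftrightarrow$(c) is Corollary~\ref{cor:UFD-Noetherian}. Only compactness remains, and I plan to prove (c)$\Rightarrow$(a) and (a)$\Rightarrow$(c). A UFD is an integral domain, and $R$ is not a field by hypothesis, so Theorem~\ref{thm:compact-G-domain} applies: $GD(R)$ is compact if and only if some $a\in R^\#$ satisfies $B_a=EC(R^\#)$, equivalently if and only if the nonzero primes of $R$ have nonzero intersection.

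For (c)$\Rightarrow$(a), let $p_1,\ldots,p_n$ be representatives of the prime elements up to associates; $n\geq1$ because $R$ is a nonfield UFD. Put $a=p_1\cdots p_n\in R^\#$. Every $x\in R^\#$ has some prime factor $p_i$, so $a\in p_iR\subseteq\sqrt{xR}$. One can justify $\sqrt{xR}$ directly: it is the product of the distinct primes dividing $x$, as in the proof of Corollary~\ref{cor:UFD-Noetherian}, and $p_i\mid a$ then gives $a\in\sqrt{xR}$. Hence $\sqrt{aR}\subseteq\sqrt{xR}$, so $[x]\in B_a$ by Lemma~\ref{lem:radical-divisibility}, and therefore $B_a=EC(R^\#)$. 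Compactness follows from Theorem~\ref{thm:compact-G-domain}. Alternatively, finiteness of $\mathcal R_{\mathrm{pr}}(R)$ already gives a finite coinitial subset, and Theorem~\ref{thm:covering-coinitial} finishes.

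For (a)$\Rightarrow$(c), take $a\in R^\#$ with $B_a=EC(R^\#)$. For any prime element $p$ we have $[p]\in B_a$, so $\sqrt{aR}\subseteq\sqrt{pR}=pR$, and hence $p\mid a$. By unique factorization $a$ has only finitely many nonassociate prime divisors, so $R$ has only finitely many nonassociate primes. Equivalently, one could argue via coinitiality exactly as in Corollary~\ref{cor:UFD-Lindelof}, with a finite union of finite sets in place of a countable one.

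I do not expect a serious obstacle. The only points needing care are that the nonfield hypothesis guarantees at least one prime, so that $a\in R^\#$ rather than a unit, and that principal prime ideals are radical, which gives $\sqrt{pR}=pR$.
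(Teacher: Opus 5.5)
Your plan matches the paper's. You cite Corollaries~\ref{cor:UFD-Lindelof} and~\ref{cor:UFD-Noetherian} for part (1) and for (b)$\Leftrightarrow$(c), and you handle compactness through Theorem~\ref{thm:compact-G-domain} using $a=p_1\cdots p_n$. The only difference is cosmetic: you use condition (2) of that theorem ($B_a=EC(R^\#)$), whereas the paper verifies the $G$-domain condition (3). Your (a)$\Rightarrow$(c) argument is correct and is essentially the paper's.

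There is, however, an error in your (c)$\Rightarrow$(a) step. The chain $a\in p_iR\subseteq\sqrt{xR}$ has the inclusion backwards. From $p_i\mid x$ you get $xR\subseteq p_iR$, hence $\sqrt{xR}\subseteq p_iR$, not $p_iR\subseteq\sqrt{xR}$. For instance, if $n\geq2$ and $x=p_1p_2$, then $p_1\notin p_1p_2R=\sqrt{xR}$.

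Your fallback sentence has the same defect. Knowing $p_i\mid a$ for a single prime factor $p_i$ of $x$ does not give $a\in\sqrt{xR}$. What is needed is that \emph{every} prime divisor of $x$ divides $a$. Then the product of the distinct prime divisors of $x$, which generates $\sqrt{xR}$, divides $a$.

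This is true because $a$ is the product of all the prime representatives, so the conclusion $[x]\in B_a$ still holds. Two clean fixes:
\begin{enumerate}
\item Write $x=up_1^{e_1}\cdots p_n^{e_n}$ and put $e=\max_i e_i$; then $x\mid a^e$.
\item Argue as the paper does: every prime ideal containing $x$ is nonzero, so it contains some $p_i$ and hence $a$. Therefore $a\in\sqrt{xR}$.
\end{enumerate}
Your alternative route is correct as stated and avoids the issue entirely. It uses the finiteness of $\mathcal R_{\mathrm{pr}}(R)$, which then serves as its own finite coinitial subset, together with Theorem~\ref{thm:covering-coinitial}.
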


\begin{proof}
	The first assertion is
	Corollary~\ref{cor:UFD-Lindelof}, while
	$(2)(b)\Leftrightarrow(2)(c)$ follows from
	Corollary~\ref{cor:UFD-Noetherian}.
	
	Suppose that $R$ has only finitely many nonassociate prime elements,
	say $p_1,\ldots,p_n$. Every nonzero prime ideal of $R$ contains a
	nonzero element, and hence contains some prime element $p_i$.
	Therefore
	\[
	0\neq p_1\cdots p_n
	\in
	\bigcap_{\substack{P\in\operatorname{Spec}(R)\\P\neq(0)}}P.
	\]
	Thus $R$ is a $G$-domain, and
	Theorem~\ref{thm:compact-G-domain} shows that $GD(R)$ is compact.
	
	Conversely, if $GD(R)$ is compact, then $R$ is a $G$-domain. Hence
	there exists
	\[
	0\neq a\in
	\bigcap_{\substack{P\in\operatorname{Spec}(R)\\P\neq(0)}}P.
	\]
	For every prime element $p$, the ideal $pR$ is a nonzero prime ideal,
	so $a\in pR$, and therefore $p\mid a$. Since a nonzero element of a
	UFD has only finitely many nonassociate prime divisors, $R$ has only
	finitely many nonassociate prime elements.
\end{proof}

\begin{remark}
	This gives another contrast with the ordinary divisor topology. For a
	nonfield integral domain, $D(R)$ is neither compact nor Noetherian;
	see \cite[Proposition~11 and Theorem~7]{YigitKoc}. In contrast,
	$GD(R)$ may satisfy both properties. For example, if
	\[
	R=k[[x]],
	\]
	then, up to associates, $R$ has a single prime element, and hence
	$GD(R)$ is both compact and Noetherian.
\end{remark}

\section{Multiplicative and quotient constructions}

The preceding sections show that the topology of $GD(R)$ is controlled
by radicals of principal ideals. We now examine how this structure
interacts with multiplication and with quotient maps. Two different
phenomena occur. Multiplication descends to $EC(R^\#)$ precisely in
the domain case, whereas quotient maps with nil kernel behave
naturally only after passing to the Kolmogorov quotient.

\subsection{Multiplication}

For rings with zero-divisors, the product of two nonzero nonunits may
vanish, so multiplication need not define a binary operation on
$EC(R^\#)$. In fact, for nonfield rings this is the only obstruction.

\begin{proposition}\label{prop:multiplication-domain}
	Let $R$ be a nonfield ring. Then the rule
	\[
	[a]\cdot[b]=[ab]
	\]
	defines a binary operation on $EC(R^\#)$ if and only if $R$ is an
	integral domain.
\end{proposition}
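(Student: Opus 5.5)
To prove Proposition~\ref{prop:multiplication-domain}, we need two things for the rule $[a]\cdot[b]=[ab]$ to define a binary operation on $EC(R^\#)$. First, it must be well defined on classes. Second, $ab$ must lie in $R^\#$ whenever $a,b\in R^\#$.

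Well-definedness always holds. If $aR=a'R$ and $bR=b'R$, then $abR=(aR)(bR)=(a'R)(b'R)=a'b'R$. So the whole issue is closure.

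Part of closure is automatic. If $a\in R^\#$, then $a$ is a nonunit, so $ab\in aR\subsetneq R$ and $ab$ is a nonunit. Thus $[a]\cdot[b]$ defines a binary operation on $EC(R^\#)$ if and only if $ab\neq0$ for all $a,b\in R^\#$.

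Suppose $R$ is an integral domain. Then for $a,b\in R^\#$ we have $a,b\neq0$, hence $ab\neq0$. Combined with the observation above, $ab\in R^\#$, so the operation is defined.

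Conversely, suppose the operation is defined, so $R^\#$ is closed under multiplication. Let $x,y\in R$ be nonzero; we show $xy\neq0$.

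If $x$ is a unit, then $xy\neq0$ since $y\neq0$. The same holds if $y$ is a unit.

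Otherwise $x,y\in R^\#$, and closure gives $xy\in R^\#$, so $xy\neq0$.

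Since $R$ has nonzero identity, $R$ is therefore an integral domain.

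The nonfield hypothesis is not needed in either direction. For a field, $EC(R^\#)=\emptyset$ and the statement is vacuous (the empty operation exists), which is presumably why it is excluded. There is no real obstacle here. The only point to get right is that nonunits are preserved under multiplication, so the sole obstruction to closure is a product being zero.
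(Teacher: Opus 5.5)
Your proof is correct and matches the paper's: you get well-definedness from $abR=(aR)(bR)$, and for the converse you note that two nonzero elements with zero product must both be nonunits (you argue directly by splitting into unit/nonunit cases, the paper argues by contrapositive). Your remark that the nonfield hypothesis only excludes the vacuous case $EC(R^\#)=\emptyset$ is also accurate.
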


\begin{proof}
	Suppose first that $R$ is an integral domain. If $a,b\in R^\#$, then
	$ab$ is a nonzero nonunit, so $[ab]\in EC(R^\#)$. Moreover, if
	$aR=a'R$ and $bR=b'R$, then
	\[
	abR=(aR)(bR)=(a'R)(b'R)=a'b'R,
	\]
	so the operation is well defined.
	
	Conversely, suppose that the displayed rule defines a binary operation
	on $EC(R^\#)$. If $R$ were not a domain, there would exist nonzero
	$a,b\in R$ such that
	\[
	ab=0.
	\]
	Both $a$ and $b$ are nonunits, and hence $[a],[b]\in EC(R^\#)$.
	However, $[ab]=[0]$ does not belong to $EC(R^\#)$, a contradiction.
\end{proof}

Recall that a topological semigroup is a semigroup whose multiplication
is jointly continuous.

\begin{theorem}\label{thm:topological-semigroup}
	Let $R$ be a nonfield integral domain. Then $GD(R)$, endowed with the
	multiplication
	\[
	[a][b]=[ab],
	\]
	is a commutative topological semigroup.
\end{theorem}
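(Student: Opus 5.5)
Proposition~\ref{prop:multiplication-domain} already gives that $[a][b]=[ab]$ is a well-defined binary operation on $EC(R^\#)$. Associativity and commutativity are inherited directly from $R$, since $[a]([b][c])=[a(bc)]=[(ab)c]=([a][b])[c]$ and $[ab]=[ba]$. So the only thing left to prove is joint continuity of
\[
\mu:EC(R^\#)\times EC(R^\#)\longrightarrow EC(R^\#),\qquad \mu([a],[b])=[ab].
\]

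Because $GD(R)$ is Alexandrov with smallest open neighbourhoods $B_a$ (Proposition~\ref{prop:basis}), the product space $GD(R)\times GD(R)$ has the set $B_a\times B_b$ as the smallest open neighbourhood of $([a],[b])$. Continuity of $\mu$ at $([a],[b])$ therefore reduces to a single inclusion:
\[
\mu(B_a\times B_b)\subseteq B_{ab}.
\]
Any open set containing $[ab]$ contains $B_{ab}$, so this inclusion suffices.

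To verify it, take $[x]\in B_a$ and $[y]\in B_b$. By Lemma~\ref{lem:radical-divisibility} this means $a\in\sqrt{xR}$ and $b\in\sqrt{yR}$, say $x\mid a^m$ and $y\mid b^n$. Put $N=\max(m,n)$. Then $x\mid a^N$ and $y\mid b^N$, hence
\[
xy\mid a^Nb^N=(ab)^N .
\]
Since $R$ is a domain, $xy\in R^\#$, and so $[xy]\in B_{ab}$. Equivalently, in radical form, $\sqrt{abR}=\sqrt{aR}\cap\sqrt{bR}\subseteq\sqrt{xR}\cap\sqrt{yR}=\sqrt{xyR}$.

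There is no serious obstacle here. The one point needing care is justifying that in a product of Alexandrov spaces the smallest neighbourhood of a point is the product of the smallest neighbourhoods. This holds because for finite products the basic open sets are products of open sets, and each factor open set already contains the minimal neighbourhood.
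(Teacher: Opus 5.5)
Your proof is correct. It verifies continuity differently from the paper.

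The paper works with preimages of basic open sets. Using the identity $\sqrt{abR}=\sqrt{aR}\cap\sqrt{bR}$ and Lemma~\ref{lem:radical-divisibility}, it shows the exact equality $\mu^{-1}(B_c)=B_c\times B_c$ for every $c\in R^\#$.

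You instead check continuity pointwise through minimal neighbourhoods. You correctly justify that $B_a\times B_b$ is the smallest neighbourhood of $([a],[b])$, so continuity reduces to the single inclusion $\mu(B_a\times B_b)\subseteq B_{ab}$. For Alexandrov spaces this amounts to checking that $\mu$ is monotone in the specialization preorder. Your argument needs only the one-directional divisibility fact $x\mid a^m,\ y\mid b^n\Rightarrow xy\mid(ab)^N$, with no radical identity, and it would apply verbatim to any monotone operation on an Alexandrov space. You also state associativity and commutativity explicitly, which the paper leaves implicit.

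What the paper's route buys is more information than continuity. The equality $\mu^{-1}(B_c)=B_c\times B_c$ says that $[ab]\in B_c$ if and only if $[a]\in B_c$ and $[b]\in B_c$. Equivalently, $\sqrt{abR}$ is the greatest lower bound of $\sqrt{aR}$ and $\sqrt{bR}$ in $\mathcal R_{\mathrm{pr}}(R)$. This is exactly the meet structure used in Corollary~\ref{cor:radical-semilattice}, which your inclusion alone does not give.
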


\begin{proof}
	By Proposition~\ref{prop:multiplication-domain}, multiplication is
	well defined on $EC(R^\#)$. Consider
	\[
	\mu:GD(R)\times GD(R)\longrightarrow GD(R),
	\qquad
	\mu([a],[b])=[ab].
	\]
	Let $c\in R^\#$. By Lemma~\ref{lem:radical-divisibility},
	\[
	([a],[b])\in\mu^{-1}(B_c)
	\]
	if and only if
	\[
	\sqrt{cR}\subseteq\sqrt{abR}.
	\]
	Since
	\[
	\sqrt{abR}
	=
	\sqrt{aR}\cap\sqrt{bR},
	\]
	this is equivalent to
	\[
	\sqrt{cR}\subseteq\sqrt{aR}
	\quad\text{and}\quad
	\sqrt{cR}\subseteq\sqrt{bR}.
	\]
	Again by Lemma~\ref{lem:radical-divisibility}, this is equivalent to
	\[
	[a]\in B_c
	\quad\text{and}\quad
	[b]\in B_c.
	\]
	Consequently,
	\[
	\mu^{-1}(B_c)=B_c\times B_c.
	\]
	Thus the preimage of every basic open set is open, and hence
	$\mu$ is continuous.
\end{proof}

After passing to the Kolmogorov quotient, the multiplicative structure
takes a particularly simple order-theoretic form. Recall that a \emph{meet-semilattice} (called an \emph{inf
	semilattice} in \cite[Definition~O-1.8]{GierzEtAl}) is a partially
ordered set in which every two elements $x$ and $y$ have an infimum,
denoted by $x\wedge y$. A semilattice endowed with a topology is
called a \emph{topological semilattice} if its meet operation
\[
S\times S\longrightarrow S,
\qquad
(x,y)\longmapsto x\wedge y,
\]
is jointly continuous; see
\cite[Definition~VI-1.11]{GierzEtAl}.

\begin{corollary}\label{cor:radical-semilattice}
	Let $R$ be a nonfield integral domain, and let
	\[
	q:EC(R^\#)\longrightarrow KQ(GD(R))
	\]
	be the quotient map. The multiplication on $GD(R)$ descends to
	$KQ(GD(R))$, and under the identification
	\[
	KQ(GD(R))\cong\mathcal R_{\mathrm{pr}}(R),
	\qquad
	q([a])\longmapsto\sqrt{aR},
	\]
	the induced operation is the meet operation
	\[
	\sqrt{aR}\wedge\sqrt{bR}
	=
	\sqrt{aR}\cap\sqrt{bR}.
	\]
	Consequently, $KQ(GD(R))$ is a topological semilattice.
\end{corollary}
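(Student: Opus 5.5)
The plan is to check three things in turn: that multiplication descends to the quotient, that the descended operation is the intersection of radicals and that this intersection is the meet in $\mathcal R_{\mathrm{pr}}(R)$, and finally that meet is continuous.

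\emph{Descent.} By Theorem~\ref{thm:KQ}(2), $q([a])=q([a'])$ holds exactly when $\sqrt{aR}=\sqrt{a'R}$. Since $R$ is a domain, $ab\in R^\#$ for $a,b\in R^\#$. We also have
\[
\sqrt{abR}=\sqrt{aR}\cap\sqrt{bR},
\]
because a prime contains $ab$ if and only if it contains $a$ or $b$. The same identity was already used in the proof of Theorem~\ref{thm:topological-semigroup}. So if $\sqrt{aR}=\sqrt{a'R}$ and $\sqrt{bR}=\sqrt{b'R}$, then $\sqrt{abR}=\sqrt{a'b'R}$. Hence $q([a])\cdot q([b]):=q([ab])$ is well defined. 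Under the homeomorphism $\overline\rho$ of Theorem~\ref{thm:KQ}(3), this operation becomes $(\sqrt{aR},\sqrt{bR})\mapsto\sqrt{aR}\cap\sqrt{bR}$.

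\emph{Meet.} The intersection $\sqrt{aR}\cap\sqrt{bR}=\sqrt{abR}$ belongs to $\mathcal R_{\mathrm{pr}}(R)$ because $ab\in R^\#$. It is contained in both ideals, and any lower bound is contained in their intersection. It is therefore the infimum, so $\mathcal R_{\mathrm{pr}}(R)$ is a meet-semilattice whose meet is intersection.

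\emph{Continuity.} I would prove this directly on $\mathcal R_{\mathrm{pr}}(R)$ with the upper Alexandrov topology, which avoids having to argue that $q\times q$ is a quotient map. Let $P=\sqrt{cR}$ and consider the basic open set $\uparrow P$. The preimage of $\uparrow P$ under $\wedge$ is the set of pairs $(X,Y)$ with $P\subseteq X\cap Y$. This is exactly $\uparrow P\times\uparrow P$, which is open in the product. Since principal upper sets form a basis, $\wedge$ is jointly continuous, and $KQ(GD(R))$ is a topological semilattice via $\overline\rho$.

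The only step needing any care is the descent, which rests on the identity $\sqrt{abR}=\sqrt{aR}\cap\sqrt{bR}$. Continuity then mirrors the computation $\mu^{-1}(B_c)=B_c\times B_c$ from Theorem~\ref{thm:topological-semigroup}.
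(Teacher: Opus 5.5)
Your proposal is correct and follows the paper's argument. You prove descent via Theorem~\ref{thm:KQ}(2) and the identity $\sqrt{abR}=\sqrt{aR}\cap\sqrt{bR}$, and then identify the induced operation with intersection. The only difference is that you verify explicitly that the intersection is the infimum in $\mathcal R_{\mathrm{pr}}(R)$ and that $\wedge^{-1}(\uparrow P)=\uparrow P\times\uparrow P$, whereas the paper simply invokes the general fact that any meet-semilattice with its upper Alexandrov topology is a topological semilattice.
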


\begin{proof}
	Suppose that
	\[
	q([a])=q([a'])
	\quad\text{and}\quad
	q([b])=q([b']).
	\]
	By Theorem~\ref{thm:KQ},
	\[
	\sqrt{aR}=\sqrt{a'R}
	\quad\text{and}\quad
	\sqrt{bR}=\sqrt{b'R}.
	\]
	Hence
	\[
	\sqrt{abR}
	=
	\sqrt{aR}\cap\sqrt{bR}
	=
	\sqrt{a'R}\cap\sqrt{b'R}
	=
	\sqrt{a'b'R},
	\]
	and therefore
	\[
	q([ab])=q([a'b']).
	\]
	Thus multiplication descends to $KQ(GD(R))$.
	
	Under the identification
	\[
	KQ(GD(R))\cong\mathcal R_{\mathrm{pr}}(R),
	\qquad
	q([a])\longmapsto\sqrt{aR},
	\]
	we have
	\[
	\sqrt{abR}
	=
	\sqrt{aR}\cap\sqrt{bR}.
	\]
	Thus the induced multiplication corresponds to the meet operation on
	$\mathcal R_{\mathrm{pr}}(R)$. Since every meet-semilattice endowed
	with its upper Alexandrov topology is a topological semilattice, the
	last assertion follows.
\end{proof}

\subsection{Nil quotients}

We next consider surjective ring homomorphisms with nil kernel.
Given such a homomorphism $f:A\to B$, one might try to define
\[
EC(B^\#)\longrightarrow EC(A^\#),\qquad
[b]\longmapsto [x],
\]
where $x$ is a lift of $b$. This assignment, however, need not be
well defined, since different lifts of the same element of $B$ need
not be associates in $A$.

\begin{example}\label{ex:lifts-not-associate}
	Let
	\[
	A=k[t,\varepsilon]/(\varepsilon^2,t\varepsilon)
	\qquad\text{and}\qquad
	B=k[t].
	\]
	Every element of $A$ can be written uniquely in the form
	\[
	p(t)+\lambda\varepsilon,
	\qquad p(t)\in k[t],\ \lambda\in k.
	\]
	Consider the surjective homomorphism
	\[
	f:A\longrightarrow B,
	\qquad
	f\bigl(p(t)+\lambda\varepsilon\bigr)=p(t).
	\]
	Then
	\[
	\ker(f)=(\varepsilon)\subseteq\operatorname{Nil}(A),
	\]
	since $\varepsilon^2=0$.
	
	The element $t\in B^\#$ has two lifts $t$ and
	$t+\varepsilon$ in $A^\#$. These lifts are not associate. Indeed,
	\[
	tA=\{tp(t):p(t)\in k[t]\},
	\]
	and hence $t+\varepsilon\notin tA$. Therefore
	\[
	tA\neq(t+\varepsilon)A.
	\]
	Consequently, the assignment
	\[
	[b]\longmapsto[x],
	\]
	where $x$ is a lift of $b$, need not be well defined on associate
	classes.
	
	Nevertheless,
	\[
	\sqrt{tA}
	=
	\sqrt{(t+\varepsilon)A}
	=
	(t,\varepsilon).
	\]
	Indeed, $(t,\varepsilon)$ is a prime ideal of $A$ containing both
	$tA$ and $(t+\varepsilon)A$. Moreover, $\varepsilon^2=0$ gives
	$\varepsilon\in\sqrt{tA}$, while
	\[
	(t+\varepsilon)^2=t^2
	\]
	gives $t\in\sqrt{(t+\varepsilon)A}$ and hence also
	$\varepsilon\in\sqrt{(t+\varepsilon)A}$.
	Thus the ambiguity disappears after passing to the Kolmogorov
	quotient.
\end{example}

The preceding example suggests that the appropriate construction is
obtained after passing to the Kolmogorov quotient. By
Theorem~\ref{thm:KQ}, this amounts to working with
$\mathcal R_{\mathrm{pr}}(R)$ endowed with its upper Alexandrov
topology.

\begin{theorem}\label{thm:nilpotent-pullback}
	Let
	\[
	f:A\longrightarrow B
	\]
	be a surjective ring homomorphism such that
	\[
	\ker(f)\subseteq\operatorname{Nil}(A).
	\]
	Then taking preimages induces an order embedding
	\[
	\Phi_f:
	\mathcal R_{\mathrm{pr}}(B)
	\longrightarrow
	\mathcal R_{\mathrm{pr}}(A),
	\qquad
	J\longmapsto f^{-1}(J).
	\]
	Equivalently, under the identifications of
	Theorem~\ref{thm:KQ}, $f$ induces a topological embedding
	\[
	KQ(GD(B))
	\longrightarrow
	KQ(GD(A)).
	\]
	
	Moreover, the image of $\Phi_f$ contains every element of
	$\mathcal R_{\mathrm{pr}}(A)$ except possibly
	$\operatorname{Nil}(A)$. More precisely:
	\begin{enumerate}
		\item if $A$ is reduced or $B$ is not reduced, then $\Phi_f$ is surjective;
		\item if $A$ is not reduced and $B$ is reduced, then
		\[
		\operatorname{Im}(\Phi_f)
		=
		\mathcal R_{\mathrm{pr}}(A)
		\setminus\{\operatorname{Nil}(A)\}.
		\]
	\end{enumerate}
\end{theorem}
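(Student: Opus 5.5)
The key formula I would prove first is
\[
f^{-1}\bigl(\sqrt{f(x)B}\bigr)=\sqrt{xA}\qquad\text{for every }x\in A .
\]
The inclusion $\supseteq$ is immediate. For $\subseteq$, suppose $f(y)^n=f(x)f(r)$. Then $y^n-xr\in\ker f\subseteq\operatorname{Nil}(A)$, so $(y^n-xr)^m=0$ for some $m$. Expanding binomially gives $y^{nm}\in xA$, hence $y\in\sqrt{xA}$.

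Next I check where $\Phi_f$ lands. Take $J=\sqrt{bB}$ with $b\in B^\#$ and choose a lift $x$ with $f(x)=b$. Then $x\neq0$. Also $x$ is a nonunit, since $f$ maps units to units. So $x\in A^\#$, and the formula gives $\Phi_f(J)=\sqrt{xA}\in\mathcal R_{\mathrm{pr}}(A)$. Since $f$ is surjective, $f(f^{-1}(J))=J$. Therefore $J\subseteq J'$ if and only if $f^{-1}(J)\subseteq f^{-1}(J')$, which makes $\Phi_f$ an order embedding.

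For the topological statement, an order embedding between posets with their upper Alexandrov topologies is a homeomorphism onto its image with the subspace topology. The reason is that $\Phi_f^{-1}(\uparrow \Phi_f(J))=\uparrow J$. Continuity follows because preimages of upper sets are upper sets. The image of $\uparrow J$ is the trace of $\uparrow\Phi_f(J)$ on the image. Theorem~\ref{thm:KQ} then transfers this to $KQ(GD(B))\to KQ(GD(A))$.

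For the image, take $x\in A^\#$. If $f(x)\in B^\#$, then $\sqrt{xA}=\Phi_f(\sqrt{f(x)B})$ by the formula. The element $f(x)$ cannot be a unit: then $x$ would be a unit modulo a nil ideal, hence a unit in $A$. So the only remaining case is $f(x)=0$. Then $x$ is a nonzero nilpotent, $A$ is not reduced, and $\sqrt{xA}=\operatorname{Nil}(A)$. This shows everything except possibly $\operatorname{Nil}(A)$ is in the image.

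\begin{itemize}
\item If $A$ is reduced, then $\ker f=0$, $f$ is an isomorphism, and $\Phi_f$ is trivially surjective.
\item If $B$ is not reduced, pick a nonzero nilpotent $b\in B$. Then $\Phi_f(\sqrt{bB})=\Phi_f(\operatorname{Nil}(B))=f^{-1}(\operatorname{Nil}(B))=\operatorname{Nil}(A)$, the last equality because the kernel is nil. So $\operatorname{Nil}(A)$ is also hit.
\item If $A$ is not reduced and $B$ is reduced, then $\operatorname{Nil}(A)\in\mathcal R_{\mathrm{pr}}(A)$, and $f^{-1}(J)=\operatorname{Nil}(A)$ would force $J=f(\operatorname{Nil}(A))=0$, which is not a radical of an element of $B^\#$ in a reduced ring. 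Hence $\operatorname{Nil}(A)$ is excluded exactly in this case.
\end{itemize}

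The main obstacle is the key formula; once it is in place, the rest is bookkeeping with lifts and nilpotent elements.
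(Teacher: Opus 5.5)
Your proof is correct and follows essentially the same route as the paper. The central identity $f^{-1}(\sqrt{f(x)B})=\sqrt{xA}$, the order embedding via $f(f^{-1}(J))=J$, the lifting argument showing that every radical other than $\operatorname{Nil}(A)$ is hit, and the three-case analysis all match the paper. The differences are cosmetic: you verify the key identity by a direct binomial expansion rather than writing $f^{-1}(bB)=xA+\ker f$ and absorbing $\ker f$ into $\sqrt{xA}$, and you spell out why an order embedding is a topological embedding for the upper Alexandrov topologies, which the paper only asserts.
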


\begin{proof}
	Let
	\[
	J=\sqrt{bB}\in\mathcal R_{\mathrm{pr}}(B),
	\]
	where $b\in B^\#$, and choose $x\in A$ such that $f(x)=b$.
	Since the image of a unit is a unit, $x\in A^\#$. Put
	\[
	N=\ker(f).
	\]
	Then
	\[
	f^{-1}(bB)=xA+N,
	\]
	and hence
	\[
	f^{-1}(J)
	=
	f^{-1}(\sqrt{bB})
	=
	\sqrt{f^{-1}(bB)}
	=
	\sqrt{xA+N}.
	\]
	Since
	\[
	N\subseteq\operatorname{Nil}(A)\subseteq\sqrt{xA},
	\]
	we obtain
	\[
	f^{-1}(J)=\sqrt{xA}.
	\]
	Thus $\Phi_f$ is well defined.
	
	Preimages preserve inclusion. Since $f$ is surjective,
	\[
	f(f^{-1}(J))=J
	\]
	for every ideal $J$ of $B$. It follows that $\Phi_f$ is injective
	and reflects inclusion, and hence is an order embedding. Under the
	upper Alexandrov topologies, it is therefore a topological
	embedding.
	
	We now determine its image. Let
	\[
	I=\sqrt{aA}\in\mathcal R_{\mathrm{pr}}(A).
	\]
	If
	\[
	I\neq\operatorname{Nil}(A),
	\]
	then $a$ is not nilpotent. Since $N\subseteq\operatorname{Nil}(A)$,
	we have $a\notin N$, and hence $f(a)\neq0$.
	
	Moreover, $f(a)$ is a nonunit. Indeed, if $f(a)$ were a unit,
	surjectivity of $f$ would give $c\in A$ such that
	\[
	f(ac)=1.
	\]
	Thus
	\[
	1-ac\in N\subseteq\operatorname{Nil}(A),
	\]
	so $ac$ is a unit, forcing $a$ to be a unit, a contradiction.
	Hence $f(a)\in B^\#$, and
	\[
	\Phi_f\bigl(\sqrt{f(a)B}\bigr)
	=
	\sqrt{aA}
	=
	I.
	\]
	Therefore the only possible element of
	$\mathcal R_{\mathrm{pr}}(A)$ not in the image is
	$\operatorname{Nil}(A)$.
	
	We also have
	\[
	f^{-1}(\operatorname{Nil}(B))
	=
	\operatorname{Nil}(A).
	\]
	Indeed, the inclusion from right to left is immediate. Conversely,
	if $f(a)$ is nilpotent, then $a^n\in N$ for some $n$, and since
	$N\subseteq\operatorname{Nil}(A)$, it follows that $a$ is
	nilpotent.
	
	If $B$ is not reduced, choose a nonzero nilpotent $b\in B$.
	Then $b\in B^\#$ and
	\[
	\sqrt{bB}=\operatorname{Nil}(B),
	\]
	so
	\[
	\Phi_f(\operatorname{Nil}(B))
	=
	\operatorname{Nil}(A).
	\]
	Hence $\Phi_f$ is surjective.
	
	If $A$ is reduced, then $N=0$, so $f$ is an isomorphism and
	$\Phi_f$ is again surjective.
	
Finally, suppose that $A$ is not reduced and $B$ is reduced. Then
$\operatorname{Nil}(A)\in\mathcal R_{\mathrm{pr}}(A)$. If
$\operatorname{Nil}(A)=\Phi_f(J)$ for some
$J\in\mathcal R_{\mathrm{pr}}(B)$, then
\[
J=f\bigl(f^{-1}(J)\bigr)
=f(\operatorname{Nil}(A))
=\operatorname{Nil}(B)
=(0),
\]
a contradiction. Therefore
\[
\operatorname{Im}(\Phi_f)
=
\mathcal R_{\mathrm{pr}}(A)
\setminus\{\operatorname{Nil}(A)\}.
\]
\end{proof}

\begin{remark}
	The map in Theorem~\ref{thm:nilpotent-pullback} is naturally
	described in terms of preimages of radical principal ideals:
	\[
	J\longmapsto f^{-1}(J).
	\]
	In particular, although lifts are used in the proof, the resulting
	map is canonical and involves no choice of lifts.
\end{remark}

The preceding result takes a particularly simple form for quotients by
nil ideals.

\begin{corollary}\label{cor:nil-quotient}
	Let $A$ be a ring and let
	\[
	I\subseteq\operatorname{Nil}(A)
	\]
	be an ideal. Then the quotient map
	\[
	\pi:A\longrightarrow A/I
	\]
	induces a topological embedding
	\[
	KQ(GD(A/I))
	\longrightarrow
	KQ(GD(A)).
	\]
	More precisely:
	\begin{enumerate}
		\item if
		\[
		I\subsetneq\operatorname{Nil}(A),
		\]
		then
		\[
		KQ(GD(A/I))
		\cong
		KQ(GD(A));
		\]
		
		\item if $A$ is not reduced and
		\[
		I=\operatorname{Nil}(A),
		\]
		then, under the identification of
		Theorem~\ref{thm:KQ}, the image of
		\[
		KQ(GD(A/\operatorname{Nil}(A)))
		\longrightarrow
		KQ(GD(A))
		\]
		corresponds to
		\[
		\mathcal R_{\mathrm{pr}}(A)
		\setminus\{\operatorname{Nil}(A)\}.
		\]
	\end{enumerate}
\end{corollary}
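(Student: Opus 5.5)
We apply Theorem~\ref{thm:nilpotent-pullback} to the quotient map $f=\pi:A\to A/I$. This map is surjective, and its kernel $I$ is contained in $\operatorname{Nil}(A)$, so the hypotheses are satisfied. The theorem then gives an order embedding
\[
\Phi_\pi:\mathcal R_{\mathrm{pr}}(A/I)\longrightarrow\mathcal R_{\mathrm{pr}}(A),\qquad J\longmapsto\pi^{-1}(J).
\]
Order embeddings are topological embeddings for the upper Alexandrov topologies. Composing with the homeomorphisms $KQ(GD(A/I))\cong\mathcal R_{\mathrm{pr}}(A/I)$ and $KQ(GD(A))\cong\mathcal R_{\mathrm{pr}}(A)$ of Theorem~\ref{thm:KQ}, we obtain the asserted topological embedding $KQ(GD(A/I))\to KQ(GD(A))$.

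For (1), assume $I\subsetneq\operatorname{Nil}(A)$. We will show that $\Phi_\pi$ is surjective by checking that $A/I$ is not reduced, which is case (1) of Theorem~\ref{thm:nilpotent-pullback}. Choose $x\in\operatorname{Nil}(A)\setminus I$. Then $\bar x=x+I$ is nonzero in $A/I$, and it is nilpotent because $x^n=0$ for some $n$. So $A/I$ is not reduced, and $\Phi_\pi$ is a surjective order embedding, that is, an order isomorphism. Hence it is a homeomorphism of the upper Alexandrov topologies, and composing with the identifications of Theorem~\ref{thm:KQ} gives $KQ(GD(A/I))\cong KQ(GD(A))$.

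For (2), take $I=\operatorname{Nil}(A)$ with $A$ not reduced. The quotient $B=A/\operatorname{Nil}(A)$ is reduced, so case (2) of Theorem~\ref{thm:nilpotent-pullback} says the image of $\Phi_\pi$ is $\mathcal R_{\mathrm{pr}}(A)\setminus\{\operatorname{Nil}(A)\}$. Transporting this through Theorem~\ref{thm:KQ} gives the stated description of the image.

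There is essentially no obstacle here. The only points needing care are the nonreducedness check in (1), and the implicit requirement that $A/I$ be a ring with nonzero identity. The latter holds because $I\subseteq\operatorname{Nil}(A)$ is a proper ideal, so $1\notin I$.
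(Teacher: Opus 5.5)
Your proposal is correct and follows the paper's proof: apply Theorem~\ref{thm:nilpotent-pullback} to $\pi$, note that $A/I$ is not reduced when $I\subsetneq\operatorname{Nil}(A)$, so $\Phi_\pi$ is a surjective order embedding and hence a homeomorphism, and note that $A/\operatorname{Nil}(A)$ is reduced for part (2). Your extra checks are valid details the paper uses without comment: that order embeddings are topological embeddings for the upper Alexandrov topologies, and that $A/I$ has nonzero identity.
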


\begin{proof}
	Apply Theorem~\ref{thm:nilpotent-pullback} to the quotient map.
	If
	\[
	I\subsetneq\operatorname{Nil}(A),
	\]
	then $A/I$ contains a nonzero nilpotent element and is therefore
	not reduced. Hence the induced embedding is surjective.
	
	If
	\[
	I=\operatorname{Nil}(A)
	\]
	and $A$ is not reduced, then $A/I$ is reduced, and the second
	assertion follows from Theorem~\ref{thm:nilpotent-pullback}.
\end{proof}

\begin{remark}
	Thus reduction modulo the nilradical has a particularly simple
	effect on the Kolmogorov quotient. If $A$ is not reduced, then,
	under the identification
	\[
	KQ(GD(A))\cong\mathcal R_{\mathrm{pr}}(A),
	\]
	the space
	\[
	KQ(GD(A_{\mathrm{red}})),
	\qquad
	A_{\mathrm{red}}=A/\operatorname{Nil}(A),
	\]
	corresponds precisely to
	\[
	\mathcal R_{\mathrm{pr}}(A)
	\setminus\{\operatorname{Nil}(A)\}.
	\]
	In other words, reduction removes the least element
	$\operatorname{Nil}(A)$ and leaves the remaining
	radical-principal order unchanged.
\end{remark}

\end{document}